\documentclass[12pt]{article}

\usepackage{amsmath, amsthm, amssymb, mathtools}
\usepackage{natbib}
\usepackage{hyperref}
\usepackage{geometry}
\usepackage{bm}
\usepackage{enumitem}
\usepackage{booktabs}
\usepackage{array}
\usepackage{xcolor}
\usepackage{tikz}
\usetikzlibrary{arrows.meta, positioning, calc, decorations.markings}

\newtheorem{theorem}{Theorem}[section]
\newtheorem{lemma}[theorem]{Lemma}
\newtheorem{proposition}[theorem]{Proposition}
\newtheorem{corollary}[theorem]{Corollary}
\newtheorem{definition}[theorem]{Definition}
\newtheorem{remark}[theorem]{Remark}
\newtheorem{assumption}[theorem]{Assumption}

\DeclareMathOperator*{\argmax}{arg\,max}

\DeclareMathOperator{\supp}{supp}

\newcommand{\R}{\mathbb{R}}
\newcommand{\E}{\mathbb{E}}
\newcommand{\ind}{\mathbf{1}}

\newcommand{\cQ}{\mathcal{Q}}
\newcommand{\cD}{\mathcal{D}}
\newcommand{\cA}{\mathcal{A}}
\newcommand{\cS}{\mathcal{S}}
\newcommand{\cP}{\mathcal{P}}
\newcommand{\cI}{\mathcal{I}}
\newcommand{\cM}{\mathcal{M}}

\tikzset{
  obs/.style  = {circle, draw, thick, minimum size=1.7em, inner sep=1pt, font=\small},
  lat/.style  = {circle, draw, thick, dashed, minimum size=1.7em, inner sep=1pt,
                 fill=gray!10, font=\small},
  arr/.style  = {-{Stealth[length=5pt,width=4pt]}, thick},
  bidir/.style= {{Stealth[length=5pt,width=4pt]}-{Stealth[length=5pt,width=4pt]},
                 thick, color=blue!70!black, bend left=35},
}

\title{Sharp Causal Bounds for Dynamic Treatment Regimes}
\author{
Alexander Alvarez\\
School of Mathematical and Computational Sciences, \\ University of Prince Edward Island\\
\texttt{alalvarez@upei.ca}
\and
Sebastian.~E.~Ferrando\\
Department of Mathematics, Toronto Metropolitan University\\
\texttt{ferrando@torontomu.ca}
}
\date{July 2026}

\begin{document}
\setlength{\emergencystretch}{2em}
\maketitle
\begin{abstract}
We study dynamic treatment regimes under contemporaneous confounding: at each stage, an unmeasured factor may affect both treatment and the next observed state, but has no further direct effect on later stages. From the observational distribution, the causal graph, and specified structural restrictions on possible next states, we construct at each state--treatment pair the set of transition probabilities compatible with this information. These local sets require no sensitivity parameter and can be combined by backward induction to obtain lower and upper bounds on the expected outcome under any given treatment regime. Our main result shows that these bounds are sharp: their endpoints are exactly the smallest and largest expected outcomes generated by causal models compatible with the same observational distribution, causal graph, and structural restrictions.
The same backward-induction method gives a maximin rule for choosing treatments by maximizing the worst-case expected outcome. Thus, for this class of models with contemporaneous confounding, the framework generalizes the classical g-formula of \citet{Robins1986} for evaluating treatment regimes and the Q-learning framework of \citet{Murphy2003} for selecting them. When the local transition probabilities are identified, the two recursions reduce to these classical methods.
\end{abstract}


\section{Introduction}\label{sec:intro}

A dynamic treatment regime (DTR) $d=(d_0,\ldots,d_{T-1})$ is a sequence
of decision rules, one for each treatment stage, where $d_t$ specifies the
treatment to be assigned from the observed state at time $t$. The quantity
of interest is the expected outcome under the regime,
\[
V^d=\mathbb E[Y^d].
\]
Standard methods for evaluating and selecting treatment regimes, including
g-computation \citep{Robins1986}, inverse-probability weighting
\citep{HernanRobins2020}, and Q-learning \citep{Murphy2003} \footnote{Murphy
\citeyearpar{Murphy2003} does not use the term \emph{Q-learning}, but
develops the equivalent finite-horizon backward recursion in terms of
Q-functions and optimal benefit-to-go functions.}, typically rely
on \emph{sequential exchangeability}. Informally, once the observed history
is taken into account, treatment assignment at the current stage contains
no additional information about the future outcomes that would arise under
the alternative treatment choices. Under this assumption, the transition
probabilities needed to evaluate a regime can be recovered from the
observed conditional distributions, leading to the usual g-formula.
When an unmeasured factor affects both treatment assignment and subsequent
outcomes, sequential exchangeability can fail, and these standard
identification arguments are no longer valid.

\noindent This paper develops a framework for evaluating and selecting DTRs when
sequential exchangeability fails because of \emph{contemporaneous}
confounding: at each stage $t$, an independent latent factor $U_t$ may
affect treatment $A_t$ and the next state $S_{t+1}$, with no persistent
effect on later stages beyond what is carried forward through the observed
state itself. Under this restriction, the possible transition probabilities
at each state--action cell are represented by a set of candidate
distributions. When the transition is identified from the observational
law, this set is a singleton; under contemporaneous confounding it becomes
a Manski-type polytope \citep{Manski1990,Manski1995}; and at an unsupported
cell it is the full simplex.

\noindent These sets are constructed from the observational law $P$, the causal
graph $G$, and the structural restrictions on possible next states.
No sensitivity parameter, instrumental variable, or externally specified
ambiguity radius is introduced. The local sets can then be combined by
backward induction to obtain lower and upper bounds on the expected outcome
under a fixed treatment regime. The calculation uses the fact that, at each
state--action cell, the transition probability can be chosen independently
from its own feasible set. This is the rectangular structure used in robust
dynamic programming \citep{Iyengar2005,NilimElGhaoui2005}.

\noindent For any family of feasible sets with this structure, the backward recursion
computes the exact minimum and maximum expected outcomes over that family.
Our main result goes further for the default sets constructed in \ref{sec:feasible}. Under
the assumed model of contemporaneous confounding, and for the specified
observational law, causal graph, and structural restrictions on possible
next states, these bounds are sharp: they are exactly the smallest and
largest values of $V^d$ that can arise from causal models satisfying those
assumptions.

\noindent The same framework can also be used to choose among treatment regimes.
We adopt a maximin criterion \citep{GilboaSchmeidler1989}: at each stage,
the rule selects the action whose worst-case expected outcome is largest.
This leads to a backward recursion closely related to the one used for
evaluation.

\noindent When every local transition probability is identified, each feasible set is a singleton. In that case, the evaluation procedure reduces to a recursive computation of the classical g-formula, while the selection procedure reduces to the usual Q-function backward recursion underlying Q-learning.

\noindent When some local transition probabilities are not identified, a treatment
regime no longer has a uniquely determined expected outcome. Instead, it
has a range of possible expected outcomes. Our evaluation procedure
computes this range, while the maximin selection rule compares regimes by
their lowest possible expected outcome.

\noindent The contemporaneous-confounding assumption is appropriate when the
unmeasured factor is temporary. Examples include an acute symptom flare or
a same-visit laboratory artifact that affects both the treatment decision
and the next observed state. It is not intended to model persistent
unmeasured characteristics, such as chronic frailty or a fixed
socioeconomic condition, that continue to affect later stages. This restriction is related to the ``memoryless'' or ``fresh'' confounder assumptions used in parts of the confounding-robust reinforcement learning literature; see Remark~\ref{rem:comparison-BrunsSmith2023}. When confounding is stage-specific in this way, the observed process retains the Markov structure needed for dynamic programming. Persistent confounding generally does not.

\paragraph{Related work.} 
A closely related strand of methodological literature is confounding-robust offline reinforcement learning \citep{KallusZhou2020,BrunsSmith2021,BrunsSmith2023,Kausik2024}. This literature commonly models unmeasured confounding through sensitivity restrictions that limit its magnitude. In particular, the stagewise-independent settings of \citet{BrunsSmith2021}, \citet{BrunsSmith2023}, and \citet{Kausik2024} yield ambiguity sets that can be propagated by robust dynamic programming; \citet{KallusZhou2020} instead obtain sharp infinite-horizon policy-value bounds through an occupancy-ratio formulation.


\noindent Our approach makes a different restriction. Rather than limiting the
magnitude of the confounder, we restrict how long its direct effect can
persist. Under contemporaneous confounding, the unmeasured factor acts
within one stage and does not remain as a hidden state in later stages.
This temporal restriction allows us to derive the local sets directly,
without introducing a sensitivity parameter.


\noindent The temporal persistence of confounding also determines whether local uncertainty can be combined across stages. \citet{BrunsSmith2021} shows that stagewise-independent
confounding preserves an observed-state Markov structure that permits
robust dynamic programming, whereas persistent confounding makes policy
evaluation substantially harder. \citet{BrunsSmith2023} further shows
that, under their sensitivity model, the resulting rectangular
construction is exactly sharp with two actions but can be conservative
with more than two. In our setting, Theorem~\ref{thm:sharp} shows that the default
local identified sets combine exactly into the causal identified set for
any finite horizon.

\noindent \citet{Tan2025} develops a complementary sensitivity-analysis approach for longitudinal studies. Their extension of the
marginal sensitivity model allows general longitudinal dependence without
a memorylessness restriction, but introduces sensitivity restrictions at
each period. The paper derives explicit observed-data representations,
via convex optimization, for sharp or conservative population bounds
depending on the sensitivity model.

\noindent Two papers are especially close in subject matter.
\citet{ZhangBareinboim2020} derives graph-based bounds for unidentified
longitudinal transition effects, extending Manski-type natural bounds
\citep{Manski1990} to arbitrary causal diagrams. These bounds are then
used to accelerate online DTR learning. Their results do not, however,
show that the bounds obtained separately at each cell combine into the
sharp identified set for the expected outcome under a fixed regime.
\citet{Saghafian2024} evaluates dynamic treatment regimes over an ambiguity set of causal models
 using an $\alpha$-maximin criterion; our maximin criterion
corresponds to the case $\alpha=1$. Their ambiguity set is specified within a chosen latent-state model class, with bounded unobserved confounding handled through sensitivity parameters. In contrast, our set of possible interventional laws is derived nonparametrically from the observational law and the assumed confounding structure, without a sensitivity parameter, and the resulting bounds are sharp.


\noindent There are also related approaches in biostatistics. One line of work
studies unmeasured confounding through sensitivity analyses that specify
parameters or distributions governing the confounding bias
\citep{RobinsRotnitzkyScharfstein2000,RoseMoodieShortreed2023}. Another line of work uses instrumental variables to guide DTR selection
under partial identification; for instance 
\citet{ChenZhang2021} develop IV-optimal and IV-improved DTRs using a
time-varying instrument and a modified Bellman recursion, while
\citet{Han2024} derives a sharp partial ordering
of counterfactual welfares over regimes using binary instruments. We take
a different route: without instruments or sensitivity parameters, we derive
sharp local feasible sets from the temporal structure imposed on the
confounding and show that they combine into the exact identified set of
interventional laws under each regime, yielding sharp regime-value bounds.

\noindent The response-function construction of \cite{BalkePearl1997}
provides an important precedent for deriving sharp causal bounds from an
observational law and a specified causal model, without introducing an
externally chosen sensitivity parameter, in the instrumental-variable setting
with imperfect compliance. Our local feasible sets and the statewise attaining
construction in Appendix~A use a related constructive sharpness strategy in
the sequential, contemporaneously confounded setting studied here.

\noindent The default construction is also related to the broader graph-based
partial-identification approach of \citet{Duarte2024}. The additional
temporal structure in our model allows the problem to be separated into
small local calculations rather than solved as one global optimization.
The backward induction used to combine these local sets
(Theorem~4.4) is standard robust-MDP methodology
\citep{Iyengar2005,NilimElGhaoui2005}. Our contribution is therefore not
a new dynamic-programming algorithm, but the causal construction of its
local inputs and the proof that, under the assumptions of this paper,
they produce sharp bounds.

\noindent In this paper we treat the observational law $P$ as known. Statistical
estimation of $P$, and inference for the corresponding partially identified regime values when $P$ is estimated from finite data, are outside the scope of this paper.


\paragraph{Organization.}
Section~2 introduces the causal model and derives the post-intervention
Markov factorization. Section~3 defines the local feasible sets and the
default Manski-type construction, and discusses how identification may
differ from one state--action cell to another
\citep{ChenDarwiche2025}. Section~4 develops the backward-induction
results, shows how the g-formula and Q-learning arise as special cases,
and introduces maximin regime selection. Section~5 presents two examples:
a two-period HIV antiretroviral example based on the benchmark of
\citet{Naimi2017}, and a synthetic biomarker example in which the
plug-in and maximin criteria rank the treatment regimes differently.
Section~6 concludes and Appendix \ref{app:thm_sharp} presents some proofs.

\section{Causal Model}\label{sec:model}

\subsection{Variables, Causal Graph, and Dynamic Treatment Regimes}

Fix a horizon $T\geq 1$. For each $t=0,\ldots,T$, let
$\mathcal S_t$ be a finite state space, and let $\mathcal A$ be a
finite treatment space. We consider structural causal models (SCMs) in which
$S_t\in\mathcal S_t$ and $A_t\in\mathcal A$, with structural equations
\begin{equation}\label{eq:scm}
\begin{aligned}
S_0     &= f_0^S(\varepsilon_0^S),\\
U_t     &= f_t^U(\varepsilon_t^U),\\
A_t     &= f_t^A(S_t,U_t,\varepsilon_t^A),\\
S_{t+1} &= f_{t+1}^S(S_t,A_t,U_t,\varepsilon_{t+1}^S),
\end{aligned}
\end{equation}
for $t=0,\ldots,T-1$. 
The state and treatment spaces $\{\mathcal S_t\}_{t=0}^T$ and $\mathcal A$ are fixed throughout, whereas the state spaces of the $U_t$ are not fixed in advance and may depend on the SCM. For each SCM, all exogenous variables appearing in
\eqref{eq:scm} are defined on a common probability space.





\noindent For each $t$, $U_t$ is a contemporaneous variable: it enters
directly only the equations for $A_t$ and $S_{t+1}$. 
The variables $S_t$ and $A_t$ are observed, while $\{U_t\}_{t=0}^{T-1}$ are latent. Thus, the observed trajectory is
\[
(S_0,A_0,S_1,A_1,\ldots,S_{T-1},A_{T-1},S_T).
\]

\noindent
\noindent The contemporaneous restriction is natural when treatment
decisions respond to transient unrecorded information, such as an acute
condition, a short-lived symptom, or other information available at the
current visit, that also affects the next observed state. Any lasting
consequences of this unmeasured factor are then carried forward through
the observed state rather than through a persistent latent variable.

\vspace{.1in}
$\mathcal S_t$ may be
larger than the observational support $\mathrm{supp}(S_t) \subseteq \mathcal{S}_t$, allowing, in principle, 
for states that are possible but not observed. At the terminal time $T$, the final state $S_T\in\mathcal S_T$ is observed
and no further treatment is assigned. The outcome is
\[
Y=y(S_T)\in\mathcal Y\subseteq\mathbb R,
\]
where $y:\mathcal S_T\to\mathcal Y$ is a known function. When
$\mathcal S_T\subseteq\mathbb R$, this includes the case $y=\mathrm{id}$.
The full observable sequence is
$\mathbf{V}=(S_0,A_0,S_1,A_1,\ldots,S_{T-1},A_{T-1},S_T,Y)$, taking values
in the product space
\[
\mathcal V \;:=\; \mathcal S_0\times\cA\times\mathcal S_1\times\cA\times\cdots
\times\mathcal S_{T-1}\times\cA\times\mathcal S_T\times\mathcal Y.
\]
Since $Y$ is a
deterministic function of $S_T$,  its law is determined by the law of the
state--action subsequence $(S_0,A_0,\ldots,A_{T-1},S_T)$, and every
trajectory-law factorization below is stated for that subsequence, with
the point mass on $Y$ left implicit.

\vspace{.2in}
\noindent The state $S_t$ may include the entire observed history up to
time $t$. Thus, although the regime below is written as a function of
$S_t$, it includes history-dependent treatment rules by taking $S_t$ to
contain the relevant past states and treatments.

\begin{definition}[Deterministic DTR]
A \emph{deterministic dynamic treatment regime}, or simply a \emph{regime}, is a sequence
$d=(d_0,\ldots,d_{T-1})$ with each $d_t:\mathcal{S}_t\to\cA$.
\end{definition}

\begin{assumption}[Modularity]\label{ass:modularity}
For any regime $d$, intervening according to $d$ replaces
$A_t=f_t^A(S_t,U_t,\varepsilon_t^A)$ by the deterministic assignment
$A_t=d_t(S_t)$ at every $t$, leaving all other structural equations
in~\eqref{eq:scm} unchanged.  We refer to this property as modularity. 
This is a standing assumption throughout the paper
\end{assumption}

\begin{remark}[Pushforward representation]\label{rem:pushforward}
Fix an SCM $M$ of the form \eqref{eq:scm}, and let
$(\Omega_M, Q_M)$ be the probability space carrying its
exogenous random variables. Recursively solving the structural equations
defines a random trajectory, equivalently a map
\[
\Phi_M:\Omega_M\longrightarrow\mathcal V.
\]
The \emph{observational law} $P_M$ is the probability distribution induced
on $\mathcal V$ by $\Phi_M$; explicitly, for every $B\subseteq\mathcal V$,
\[
P_M(B)=Q_M\bigl(\Phi_M^{-1}(B)\bigr).
\]
Under a
regime $d$, modularity (Assumption~\ref{ass:modularity}) replaces only
the treatment equations,  giving a second map into $\mathcal V$ whose
induced distribution is the \emph{interventional law} $P^d_M$.
Everything below uses $P_M$ and $P^d_M$ as ordinary (finite)
probability distributions on $\mathcal V$.  
\end{remark}
\noindent The causal value of regime $d$ in model $M$ is $V_M^d=\E^{P_M^d}[Y].$

\begin{assumption}[Independent noise]\label{ass:npc}
For each SCM, the exogenous noise terms $\varepsilon_0^S$ and
$\{\varepsilon_t^U,\varepsilon_t^A,\varepsilon_{t+1}^S\}_{t=0}^{T-1}$
in~\eqref{eq:scm} are mutually independent.
\end{assumption}

\noindent Assumption~\ref{ass:npc}, together with the recursive structure
in~\eqref{eq:scm}, implies that $\{U_t\}_{t=0}^{T-1}$ are mutually
independent and that $U_t\perp\!\!\!\perp S_t$ for every $t$. Thus, each
$U_t$ is a distinct contemporaneous factor rather than a persistent hidden
state.

\begin{remark}\label{rem:classical_framework}
Model ~\eqref{eq:scm} is closely related to the classical framework of \citet{Robins1986} and
\citet{HernanRobins2020}, which works with a time-varying covariate $L_t$ and
defines the history available at visit $t$ as $\bar H_t := (L_0, A_0, L_1, A_1,
\dots, A_{t-1}, L_t)$, assuming directly that treatment carries no further
information about the future counterfactual trajectory given this history,
$A_t \perp\!\!\!\perp (S_{t+1}^{\bar a}, \dots, S_T^{\bar a}, Y^{\bar a}) \mid
\bar H_t$ for every $t$ and every compatible treatment history $\bar a$, called
the sequential exchangeability assumption. Taking the generic state to be the
full history, $S_t := \bar H_t$, and choosing $f^A_t$ and $f^S_{t+1}$ so that
neither depends on $U_t$, equation ~\eqref{eq:scm} reduces to
\[
A_t = f^A_t(S_t, \varepsilon^A_t), \qquad
S_{t+1} = f^S_{t+1}(S_t, A_t, \varepsilon^S_{t+1}),
\]
with no latent common cause of $A_t$ and $S_{t+1}$ at any step. The
absence of unobserved confounders $U_t$, together with Assumption ~\ref{ass:npc}'s independent noise, makes this model satisfy the 
sequential exchangeability assumption in that special case. Relative to the classical
framework, model ~\eqref{eq:scm}  then differs in two opposite directions. First, with $U_t$ inert at every step, model (1) is \emph{more restrictive}: sequential
exchangeability is a distributional condition on $\bar H_t$, $A_t$, and the
future trajectory and other
data-generating processes besides independent-noise SCMs can satisfy it; model
~\eqref{eq:scm} fixes one particular such mechanism. 
This added restrictiveness does not affect the usual single-regime
identification under sequential exchangeability. It does, however, impose
additional restrictions on the joint distribution of counterfactual variables
under different regimes, a distinction made precise by
\citet{Richardson2013}.
Second, by allowing $U_t$ to be active, model ~\eqref{eq:scm} is also \emph{more general}: it accommodates a latent common cause of $A_t$ and $S_{t+1}$ at each step, that is, unmeasured confounding of exactly the kind the classical framework's assumptions exclude outright.
\end{remark}

\noindent Fix $G$ to be an acyclic directed mixed graph (ADMG) with
directed skeleton
\[
S_t\to A_t,\quad S_t\to S_{t+1},\quad A_t\to S_{t+1}\quad(t=0,\ldots,T-1),
\qquad S_T\to Y,
\]
and a bidirected edge $A_t\leftrightarrow S_{t+1}$ at each step $t$ where
the edge is \emph{permitted}.  
As in a latent projection
\citep{Richardson2003}, such an edge indicates that $U_t$ is permitted
to affect \emph{both} the $A_t$ and $S_{t+1}$ equations.  Particular
structural functions may nevertheless be insensitive to $U_t$ in one
or both equations.  Absence of the edge excludes a latent common cause
of $A_t$ and $S_{t+1}$.
Definition~\ref{def:admissible_class} below makes this
precise.  Since $Y$
is a known deterministic function of $S_T$ alone and there is no terminal
treatment $A_T$, $S_T\to Y$ is the only edge into $Y$.
Figure~\ref{fig:dag} depicts the $T=2$ case,  Its directed skeleton matches the
canonical two-period HIV antiretroviral example of \citet{Robins2000} and
\citet{Naimi2017}, revisited in Section~\ref{subsec:hiv}.  The bidirected edges 
are a hypothetical extension of that example,  not a feature of the original analysis which assumes
sequential exchangeability.\\

\noindent The graph $G$ is fixed throughout the paper and is supplied by the
analyst rather than inferred from the observational law $P(\mathbf v)$.
At each step $t$, absence of the bidirected edge
$A_t\leftrightarrow S_{t+1}$ is a \emph{structural exclusion}: no
admissible SCM may contain a latent common cause of $A_t$ and
$S_{t+1}$. Presence of the edge permits such contemporaneous
confounding, although particular admissible structural functions may
be insensitive to the corresponding latent factor.

\begin{figure}[t]
\centering
\begin{tikzpicture}[node distance=1.5cm and 1.7cm]
  \node[obs] (S0)  {$S_0$};
  \node[lat] (U0)  [above right=0.7cm and 2.9cm of S0] {$U_0$};
  \node[obs] (A0)  [right=1.7cm of S0] {$A_0$};
  \node[obs] (S1)  [right=1.7cm of A0] {$S_1$};
  \node[lat] (U1)  [above right=0.7cm and 2.9cm of S1] {$U_1$};
  \node[obs] (A1)  [right=1.7cm of S1] {$A_1$};
  \node[obs] (S2)  [right=1.7cm of A1] {$S_2$};
  \node[obs] (Y)   [right=1.4cm of S2] {$Y$};

  \draw[arr] (S0) -- (A0);
  \draw[arr] (S0) to[bend right=18] (S1);
  \draw[arr] (A0) -- (S1);
  \draw[arr] (S1) -- (A1);
  \draw[arr] (S1) to[bend right=18] (S2);
  \draw[arr] (A1) -- (S2);
  \draw[arr] (S2) -- (Y);

  \draw[arr, dashed] (U0) -- (A0);
  \draw[arr, dashed] (U0) -- (S1);
  \draw[arr, dashed] (U1) -- (A1);
  \draw[arr, dashed] (U1) -- (S2);

  \node[font=\small, above=0.05cm of S0, xshift=-1.5cm]
        {(a) Latent-variable SCM, $T=2$};
\end{tikzpicture}

\bigskip

\begin{tikzpicture}[node distance=1.5cm and 1.7cm]
  \node[obs] (S0)  {$S_0$};
  \node[obs] (A0)  [right=1.7cm of S0] {$A_0$};
  \node[obs] (S1)  [right=1.7cm of A0] {$S_1$};
  \node[obs] (A1)  [right=1.7cm of S1] {$A_1$};
  \node[obs] (S2)  [right=1.7cm of A1] {$S_2$};
  \node[obs] (Y)   [right=1.4cm of S2] {$Y$};

  \draw[arr] (S0) -- (A0);
  \draw[arr] (S0) to[bend right=18] (S1);
  \draw[arr] (A0) -- (S1);
  \draw[arr] (S1) -- (A1);
  \draw[arr] (S1) to[bend right=18] (S2);
  \draw[arr] (A1) -- (S2);
  \draw[arr] (S2) -- (Y);

  \draw[bidir] (A0) to node[above, font=\scriptsize, yshift=2pt]
       {$U_0$} (S1);
  \draw[bidir] (A1) to node[above, font=\scriptsize, yshift=2pt]
       {$U_1$} (S2);

  \node[font=\small, above=0.05cm of S0, xshift=-2.0cm]
        {(b) Observed ADMG $G$};
\end{tikzpicture}

\caption{Causal graphs for the $T=2$ case.
  \textbf{(a)}~Full SCM; dashed arrows involve latents $U_0,U_1$.
  \textbf{(b)}~Observed ADMG $G$; blue bidirected edges $A_t\leftrightarrow
  S_{t+1}$ (labelled by their latent source) encode contemporaneous
  confounding. The terminal state $S_2$ determines the known outcome
  $Y=y(S_2)$, drawn as an explicit node with a single edge $S_2\to Y$.}
\label{fig:dag}
\end{figure}

\noindent The edges $A_t\leftrightarrow S_{t+1}$ are the
\emph{only} bidirected edges $G$ can contain; persistent latent variables
inducing longer-range dependence, such as $A_t\leftrightarrow S_{t+k}$ for
$k\geq2$, fall outside the contemporaneous-confounding model studied here.

\begin{definition}[Structural transition kernel]\label{def:kernel}
For a model $M$ of the form~\eqref{eq:scm}, every $t\in\{0,\ldots,T-1\}$,
$s\in\mathcal S_t$, and $a\in\cA$, define the \emph{structural transition
kernel}
\[
\kappa_{M,t}(\cdot\mid s,a) \;:=\;
\mathcal L\bigl(f_{t+1}^S(s,a,U_t,\varepsilon_{t+1}^S)\bigr),
\]
the law, under $M$'s exogenous distribution $Q_M$, of the right-hand side of
the $S_{t+1}$-equation in~\eqref{eq:scm} evaluated at the fixed
hypothetical pair $(s,a)$.
\end{definition}

\noindent 
The random variable whose law defines
$\kappa_{M,t}(\cdot\mid s,a)$ is obtained from the $S_{t+1}$-equation
in~\eqref{eq:scm} by fixing $S_t=s$ and $A_t=a$, while leaving
$U_t$ and $\varepsilon_{t+1}^S$ random under $Q_M$.  Thus
$\kappa_{M,t}(\cdot\mid s,a)$ is part of the specification of $M$ and
is defined at every $(t,s,a)$. Here and below, we use \emph{cell} to mean a local time-state-action triple $(t,s,a)$. Accordingly, the feasible sets below are indexed by
the cell $(t,s,a)$.

\begin{definition}[Local structural successor sets]
\label{def:structural_successors}
For each $t\in\{0,\ldots,T-1\}$, $s\in\mathcal S_t$, and
$a\in\cA$, fix a nonempty set
\[
\mathcal S_{t+1}(s,a)\subseteq \mathcal S_{t+1},
\]
called the \emph{local structural successor set}, or simply \emph{structural successor set} at $(t,s,a)$. 
An SCM $M$ of the form~\eqref{eq:scm} \emph{respects the structural
successor sets} if, for every $t,s,a$,
\[
f_{t+1}^S(s,a,u,e)\in\mathcal S_{t+1}(s,a)
\qquad\text{for all admissible }u,e.
\]
\end{definition}

\noindent For convenience, collect the local structural successor sets as
\[
\mathbf S
:=
\bigl(\mathcal S_{t+1}(s,a)\bigr)_{
t=0,\ldots,T-1;\,
s\in\mathcal S_t;\,
a\in\cA}.
\]
The family $\mathbf S$ is fixed throughout the paper. Its role is simply
to record, for each $(s,a)$, which values of $S_{t+1}$ the model allows.
Every SCM considered below will be required to respect these sets.
For example, in the full-history formulation,
$\mathcal S_{t+1}(s,a)$ contains only histories that validly extend
$(s,a)$.  When no restriction beyond the ambient state space is intended,
one may simply take $\mathcal S_{t+1}(s,a)=\mathcal S_{t+1}$.

\begin{definition}\label{def:admissible_class}
Fix the local structural successor family $\mathbf S$. Let
$\cM(P,G,\mathbf S)$ denote the collection of SCMs $M$ of the
form~\eqref{eq:scm} that satisfy Assumption~\ref{ass:npc}, whose induced
observational law (Remark~\ref{rem:pushforward}) reproduces the fixed
observational law, $P_M=P$, that are \emph{compatible} with the fixed
graph $G$, and that respect the local structural successor sets of
Definition~\ref{def:structural_successors}.
Graph compatibility has the same meaning as above:
if $A_t\leftrightarrow S_{t+1}$ is absent from $G$, no latent variable
may affect both $A_t$ and $S_{t+1}$ at step $t$; if the edge is present,
such a common cause is permitted but not required.
\end{definition}

\noindent Every SCM in $\cM(P,G,\mathbf S)$ shares the same observational law
$P_M=P$, while the induced interventional laws $P^d_M$ and structural
kernels $\kappa_{M,t}$ need not coincide across $\cM(P,G,\mathbf S)$; this
variation is the source of the causal ambiguity studied below.

\begin{remark}\label{rem:true_scm}
We assume throughout that the data are generated by some SCM
$M^\star\in\cM(P,G,\mathbf S)$. This SCM is a single, fixed object, but
in general unknown to the analyst, who knows only that
$M^\star\in\cM(P,G,\mathbf S)$. Since $M^\star\in\cM(P,G,\mathbf S)$,
$P_{M^\star}=P$. Throughout the paper, we write
\[
P^d:=P^d_{M^\star},\qquad
\kappa_t(\cdot\mid s,a):=\kappa_{M^\star,t}(\cdot\mid s,a)
\]
for the corresponding interventional law and structural kernel.
\end{remark}

\begin{assumption}[Finite compatible structural successor specification]
\label{ass:finite}
The state spaces $\mathcal S_t$, $t=0,\ldots,T$, the treatment space
$\cA$, and the outcome space $\mathcal Y$ are finite. Moreover,
\[
\cM(P,G,\mathbf S)\neq\varnothing.
\]
\end{assumption}

\noindent The nonemptiness condition requires the externally specified local
structural successor sets, the graph $G$, and the observational law $P$
to be mutually compatible. In particular, it ensures that the maintained model
class contains at least one SCM satisfying all of the structural,
graphical, and observational restrictions.

\begin{remark}[Observed-support compatibility]
\label{rem:observed_support_compatibility}
If $P(S_t=s,A_t=a)>0$, then
\[
\operatorname{supp} P(S_{t+1}\mid S_t=s,A_t=a)
\subseteq \mathcal S_{t+1}(s,a).
\]
Indeed, by Assumption~\ref{ass:finite}, choose
$M^0\in\cM(P,G,\mathbf S)$. Since $M^0$ respects the local structural
successor sets, on the event $\{S_t=s,A_t=a\}$ its structural equation
for $S_{t+1}$ takes values in $\mathcal S_{t+1}(s,a)$ almost surely.
Because $P_{M^0}=P$, the claimed inclusion follows.
\end{remark}

\begin{remark}
Membership of $z$ in $\mathcal S_{t+1}(s,a)$ means only that the
maintained structural specification permits $z$ as a successor of
$(s,a)$. It does not require $z$ to occur under the true SCM, or under
every SCM in $\cM(P,G,\mathbf S)$.
\end{remark}

\noindent Finiteness (Assumption~\ref{ass:finite}) is imposed for expositional and
computational simplicity; many of the results in this paper have natural extensions to continuous state spaces under suitable regularity conditions; we do not pursue this extension here.

\vspace{.05in}
\noindent Finally,  model~\eqref{eq:scm} assumes that past observed history affects treatment
and transitions only through $S_t$. Taking $S_t=\bar H_t$
(Remark~\ref{rem:classical_framework}) makes this automatic; a
lower-dimensional summary $S_t=\phi(\bar H_t)$ is often used instead for
tractability \citep{Shortreed2011,Lizotte2016}, at the cost of the additional assumption that this summary is sufficient to represent the dependence on past observed history in model~\eqref{eq:scm}. Everything
below is stated for generic $S_t$, so either choice is compatible with the
same framework.

\subsection{The Post-Intervention Factorization}\label{sec:factorization}

Fix a model $M$ of the form~\eqref{eq:scm}. Under regime $d$, the law
$P_M^d$ (Remark~\ref{rem:pushforward}) governs the trajectory obtained
by assigning treatments according to $d$ under $M$. To derive the
structure of $P_M^d$, apply the intervention specified by $d$: replacing
$A_t=f_t^A(S_t,U_t,\varepsilon_t^A)$ by $A_t=d_t(S_t)$
(Assumption~\ref{ass:modularity}) removes every original input to $A_t$,
including $U_t$:
\begin{equation}\label{eq:scm_mutilated}
S_0=f_0^S(\varepsilon_0^S),\quad
U_t=f_t^U(\varepsilon_t^U),\quad
A_t=d_t(S_t),\quad
S_{t+1}=f_{t+1}^S(S_t,A_t,U_t,\varepsilon_{t+1}^S).
\end{equation}

\noindent 
Under intervention, the arrow $U_t\to A_t$ is severed while $U_t\to
S_{t+1}$ remains. Thus $U_t$ has only one observed child, $S_{t+1}$;
any influence on later observed variables is mediated through this observed
node. Consequently, $U_t$ induces no bidirected edge in the latent projection
\citep{Richardson2003}. Since
Assumption~\ref{ass:npc} makes the $\{U_t\}$ mutually independent, the
observed marginal graph $G_{\bar d}^{\,\mathrm{obs}}$ of the mutilated SCM
\eqref{eq:scm_mutilated} acquires no bidirected edges at all (see DAG in Figure~\ref{fig:dag_mutilated}).

\begin{figure}[t]
\centering
\begin{tikzpicture}[node distance=1.5cm and 1.7cm]
  \node[obs] (S0)  {$S_0$};
  \node[obs,fill=gray!15] (A0)  [right=1.7cm of S0] {$A_0$};
  \node[obs] (S1)  [right=1.7cm of A0] {$S_1$};
  \node[obs,fill=gray!15] (A1)  [right=1.7cm of S1] {$A_1$};
  \node[obs] (S2)  [right=1.7cm of A1] {$S_2$};
  \node[obs] (Y)   [right=1.4cm of S2] {$Y$};

  \draw[arr] (S0) -- (A0);
  \draw[arr] (S0) to[bend right=18] (S1);
  \draw[arr] (A0) -- (S1);
  \draw[arr] (S1) -- (A1);
  \draw[arr] (S1) to[bend right=18] (S2);
  \draw[arr] (A1) -- (S2);
  \draw[arr] (S2) -- (Y);
\end{tikzpicture}
\caption{Mutilated ADMG $G^{\,\mathrm{obs}}_{\bar d}$ for the $T=2$ case of
Figure~\ref{fig:dag} (shaded~$=$~determined by $A_t=d_t(S_t)$)): severing $U_t\!\to\!A_t$
eliminates all bidirected edges, yielding an ordinary DAG. The terminal
state $S_2$ determines $Y=y(S_2)$ deterministically.}
\label{fig:dag_mutilated}
\end{figure}

\noindent
By \eqref{eq:scm_mutilated} and Assumption~\ref{ass:npc}, conditional on
$(S_t,A_t)$ the next state $S_{t+1}$ is independent of the earlier
observed history. Hence the post-intervention law is Markov with respect
to $G_{\bar d}^{\,\mathrm{obs}}$.

Reading off the parent sets from
the directed skeleton we get $\mathrm{pa}(S_0)=\emptyset$,
$\mathrm{pa}(A_t)=\{S_t\}$, $\mathrm{pa}(S_{t+1})=\{S_t,A_t\}$ . Then,  the joint
law factorizes as
\begin{equation}\label{eq:dag_factor}
P_M^d(\mathbf{v})
= P_M^d(s_0)\prod_{t=0}^{T-1} P_M^d(a_t\mid s_t)\cdot P_M^d(s_{t+1}\mid s_t,a_t).
\end{equation}
As an ordinary chain-rule factorization, \eqref{eq:dag_factor} is
meaningful as written wherever the conditioning events have positive
$P_M^d$-probability. The theorem below extends \eqref{eq:dag_factor} to a formula valid at
\emph{every} trajectory, including those passing through states or
state--action pairs that are null or unreachable under $P_M^d$.  It
replaces the conditional transition probability by the always-defined
structural kernel $\kappa_{M,t}(\cdot\mid s_t,a_t)$ of
Definition~\ref{def:kernel}, and uses the deterministic treatment rule
$A_t=d_t(S_t)$ explicitly.

\begin{theorem}\label{thm:factorization}
Fix any model $M$ of the form~\eqref{eq:scm} satisfying
Assumption~\ref{ass:npc}, with observational law $P_M$. Under the
standing modularity assumption (Assumption~\ref{ass:modularity}), for
every regime $d$ and every trajectory $\mathbf v$,
\begin{equation}\label{eq:trunc_factor}
P^d_M(\mathbf{v})
\;=\;
P_M(s_0)
\prod_{t=0}^{T-1}
\ind\{a_t=d_t(s_t)\}
\cdot
\kappa_{M,t}(s_{t+1}\mid s_t,a_t).
\end{equation}
In particular, when $M\in\cM(P,G,\mathbf S)$, $P_M(s_0)=P(s_0)$.
\end{theorem}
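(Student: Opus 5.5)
We prove \eqref{eq:trunc_factor} directly from the mutilated system \eqref{eq:scm_mutilated}, by induction on the length of the trajectory prefix. We do not use conditional probabilities, so null conditioning events never arise. For $k=0,\ldots,T$ let $\mathbf v_k=(s_0,a_0,\ldots,a_{k-1},s_k)$ denote a prefix. The induction claim is
\[
P^d_M(S_0=s_0,A_0=a_0,\ldots,S_k=s_k)=P_M(s_0)\prod_{t=0}^{k-1}\ind\{a_t=d_t(s_t)\}\,\kappa_{M,t}(s_{t+1}\mid s_t,a_t).
\]
The coordinate $Y=y(S_T)$ is a point mass and can be dropped, as the paper indicates. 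Two facts are used repeatedly.

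\emph{Base case} ($k=0$). The equation $S_0=f_0^S(\varepsilon_0^S)$ is untouched by the intervention, so the law of $S_0$ is the same under $P^d_M$ and $P_M$. Hence $P^d_M(s_0)=P_M(s_0)$.

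\emph{Independence structure.} In the mutilated SCM, the prefix $(S_0,A_0,\ldots,S_k)$ is a measurable function of
\[
\varepsilon_0^S,\ \{\varepsilon_t^U,\varepsilon_{t+1}^S\}_{t<k}.
\]
The treatment noises $\varepsilon_t^A$ no longer enter, since $A_t=d_t(S_t)$. By contrast, $(U_k,\varepsilon_{k+1}^S)$ is a function of $(\varepsilon_k^U,\varepsilon_{k+1}^S)$. By Assumption~\ref{ass:npc} these two collections of noise are independent, so the prefix is independent of $(U_k,\varepsilon_{k+1}^S)$.

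\emph{Inductive step.} Consider the event $E=\{\text{prefix}=\mathbf v_k,\ A_k=a_k,\ S_{k+1}=s_{k+1}\}$.
\begin{itemize}
\item Since $A_k=d_k(S_k)$ deterministically, $E$ is empty unless $a_k=d_k(s_k)$. This produces the indicator factor.
\item On the event $\{\text{prefix}=\mathbf v_k\}$ we have $S_{k+1}=f^S_{k+1}(s_k,a_k,U_k,\varepsilon_{k+1}^S)$, with $(s_k,a_k)$ now fixed values. So $E$ is the intersection of $\{\text{prefix}=\mathbf v_k\}$ with $\{f^S_{k+1}(s_k,a_k,U_k,\varepsilon^S_{k+1})=s_{k+1}\}$.
\item By the independence structure these two events are independent. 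The probability of the second is $\kappa_{M,k}(s_{k+1}\mid s_k,a_k)$ by Definition~\ref{def:kernel}. The law of $(U_k,\varepsilon^S_{k+1})$ is the same under $Q_M$ whether or not we intervene, because only the treatment equations change (Assumption~\ref{ass:modularity}).
\end{itemize}
Multiplying the two probabilities and applying the induction hypothesis completes the step. Setting $k=T$ gives \eqref{eq:trunc_factor}.

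\emph{Final claim.} If $M\in\cM(P,G,\mathbf S)$, then $P_M=P$, so $P_M(s_0)=P(s_0)$.

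\emph{Main obstacle.} The delicate point is the freezing step: replacing the random $(S_k,A_k)$ inside $f^S_{k+1}$ by the fixed values $(s_k,a_k)$ on the event, and then invoking independence. This requires checking that $U_k$ is independent of the entire past prefix, not merely of $S_k$. That holds because $U_k$ depends only on $\varepsilon_k^U$, while the past is generated only by earlier noises.

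In the observational SCM, $U_k$ enters $A_k$, so the substitution would fail there. The formula is valid precisely because the intervention removes $U_k$ from the $A_k$ equation. The argument works at every trajectory, including null ones: when the prefix has probability zero both sides vanish, and the kernel factor is always defined.
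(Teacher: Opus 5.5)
Your proof is correct and follows the same route as the paper. Both start from $P^d_M(s_0)=P_M(s_0)$, use that in the mutilated SCM the fresh noise $(U_t,\varepsilon^S_{t+1})$ is independent of the preceding trajectory while $A_t=d_t(S_t)$ is deterministic, and multiply the kernels $\kappa_{M,t}$ along the trajectory; your prefix induction over unconditional events, with the explicit freezing step, simply makes rigorous what the paper states in one paragraph, including the null-trajectory case.
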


\begin{proof}
Under Assumption~\ref{ass:modularity}, the intervention replaces the
treatment equation by $A_t=d_t(S_t)$, while the transition equation
remains $S_{t+1}=f_{t+1}^S(S_t,A_t,U_t,\varepsilon_{t+1}^S)$. At a fixed
state--action pair $(s_t,a_t)$, the next-state mechanism is therefore
the structural kernel $\kappa_{M,t}(\cdot\mid s_t,a_t)$ of
Definition~\ref{def:kernel}, regardless of whether $(s_t,a_t)$ has
positive probability under $P_M^d$. Since the fresh exogenous variables
at step $t$ are independent of the preceding trajectory
(Assumption~\ref{ass:npc}), and $A_t=d_t(S_t)$ is deterministic given
$S_t$, multiplying these successive transition mechanisms together with
the deterministic treatment indicators $\ind\{a_t=d_t(s_t)\}$, starting
from $P_M^d(s_0)=P_M(s_0)$, yields the stated factorization
\eqref{eq:trunc_factor} at every trajectory $\mathbf v$.
\end{proof}

\begin{remark}\label{rem:factorization_suppress}
Specializing Theorem~\ref{thm:factorization} to $M=M^\star$, in the
notation already fixed by Remark~\ref{rem:true_scm}, \eqref{eq:trunc_factor}
reads $P^d(\mathbf v)=P(s_0)\prod_{t=0}^{T-1}\ind\{a_t=d_t(s_t)\}\,
\kappa_t(s_{t+1}\mid s_t,a_t)$. Theorem~\ref{thm:factorization} is
invoked in this suppressed form throughout the body of the paper, and in
its explicit model-indexed form in Appendix~\ref{app:thm_sharp}, where
the argument ranges over $M\in\cM(P,G,\mathbf S)$.
\end{remark}

\begin{remark}\label{rem:comparison-BrunsSmith2023}
The memorylessness condition of \citet{BrunsSmith2023} is close to, but
weaker than, Assumption~\ref{ass:npc} in one respect. It allows
the distribution of \(U_t\) to depend on the current observed state \(S_t\),
whereas our assumption implies \(U_t \perp\!\!\!\perp S_t\). Their marginal-MDP result
is closely related to Theorem~\ref{thm:factorization}: in both
settings, the absence of persistent latent confounding implies that the
post-intervention process is Markov in the observed state after the latent
variables are marginalized. In our setting, the additional independence lets
us relate the structural transition kernels directly to the observational law.
Appendix~A then uses this relationship in a statewise construction showing
that the local feasible sets combine into the exact causal identified set for
each regime.
\end{remark}

\noindent Later arguments will also use a sequential factorization of the observational law $P(v)$. Because the observed graph may contain bidirected edges
$A_t\leftrightarrow S_{t+1}$, this factorization does not in general
follow from the graph alone. It does, however, follow from the stage-local structure of model~\eqref{eq:scm},  as the next proposition shows. Throughout, we adopt the convention that an ordinary conditional probability is $0$ whenever its conditioning event has probability zero, so that a product containing such a factor is $0$; this lets the factorization below be stated as holding at \emph{every} trajectory, not only those with positive-probability prefixes.

\begin{proposition}\label{prop:obs_factorization}
Under Assumption~\ref{ass:npc}, for every SCM $M$
of the form~\eqref{eq:scm}, the observational law $P_M$ admits a
factorization of the form
\begin{equation}\label{eq:obs_factor}
P_M(\mathbf v)
\;=\;
P_M(s_0)\prod_{t=0}^{T-1} P_M(a_t,s_{t+1}\mid s_t)
\;=\;
P_M(s_0)\prod_{t=0}^{T-1} P_M(a_t\mid s_t)\,P_M(s_{t+1}\mid s_t,a_t),
\end{equation}
at every trajectory $\mathbf v$. When $M\in\cM(P,G,\mathbf S)$, $P_M=P$, so
\eqref{eq:obs_factor} applies with $P_M$ replaced by $P$.
\end{proposition}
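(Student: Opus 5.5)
The plan is to show that, under $P_M$, the pair $(A_t,S_{t+1})$ is conditionally independent of the earlier history $H_t:=(S_0,A_0,\ldots,A_{t-1})$ given $S_t$. We will then write $P_M(\mathbf v)$ by the ordinary chain rule along the ordering $S_0,(A_0,S_1),\ldots,(A_{T-1},S_T)$ and drop the history from each conditioning set. The second equality in \eqref{eq:obs_factor} is just the chain rule $P_M(a_t,s_{t+1}\mid s_t)=P_M(a_t\mid s_t)P_M(s_{t+1}\mid s_t,a_t)$. The paper's convention handles null conditioning events: set every conditional to $0$ when its conditioning event is null. Then, if some prefix has probability zero, both sides vanish, the left because the trajectory event is contained in the null prefix event. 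So it suffices to argue along positive-probability prefixes.

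\textbf{Step 1 (the key independence).} By the recursive structure of \eqref{eq:scm}, the observed history $(H_t,S_t)$ is a measurable function of the exogenous variables
\[
\mathcal E_{<t}:=\bigl(\varepsilon_0^S,\{\varepsilon_k^U,\varepsilon_k^A,\varepsilon_{k+1}^S\}_{k<t}\bigr).
\]
Moreover,
\[
(A_t,S_{t+1})=g_t\bigl(S_t,\varepsilon_t^U,\varepsilon_t^A,\varepsilon_{t+1}^S\bigr),
\]
where $g_t(s,e^U,e^A,e^S):=\bigl(f_t^A(s,f_t^U(e^U),e^A),\,f_{t+1}^S(s,f_t^A(s,f_t^U(e^U),e^A),f_t^U(e^U),e^S)\bigr)$. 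Assumption~\ref{ass:npc} makes the block $\mathcal E_t:=(\varepsilon_t^U,\varepsilon_t^A,\varepsilon_{t+1}^S)$ independent of $\mathcal E_{<t}$, and hence independent of $(H_t,S_t)$. Therefore, for any history value $(h_t,s_t)$ of positive probability,
\[
P_M(a_t,s_{t+1}\mid h_t,s_t)=Q_M\bigl(g_t(s_t,\mathcal E_t)=(a_t,s_{t+1})\bigr).
\]
The right-hand side depends on $s_t$ only. Averaging over $h_t$ given $S_t=s_t$ shows that it also equals $P_M(a_t,s_{t+1}\mid s_t)$.

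\textbf{Step 2 (assembly).} The chain rule gives
\[
P_M(\mathbf v)=P_M(s_0)\prod_t P_M(a_t,s_{t+1}\mid h_t,s_t).
\]
Substituting Step 1 in each factor yields the first equality in \eqref{eq:obs_factor}. The point mass on $Y=y(S_T)$ is left implicit, as agreed in the paper. For $M\in\cM(P,G,\mathbf S)$, we have $P_M=P$ by definition, which gives the final sentence of the proposition.

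\textbf{Main obstacle.} The only delicate point is Step 1. Although $A_t$ and $S_{t+1}$ share the latent $U_t$, so that no DAG factorization separates them, $U_t$ is built solely from the fresh noise $\varepsilon_t^U$. It is therefore independent of the past exogenous variables, and the proof must treat $(A_t,S_{t+1})$ jointly as a single block. The argument is essentially the one used in Theorem~\ref{thm:factorization}, but without the intervention, so $A_t$ keeps its dependence on $U_t$.
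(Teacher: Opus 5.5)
Your proposal is correct and follows essentially the same route as the paper. The key step is the independence of the fresh block $(U_t,\varepsilon_t^A,\varepsilon_{t+1}^S)$ from the observed past, so that the law of $(A_t,S_{t+1})$ given the history depends only on $S_t$; the chain rule is then applied on positive-probability prefixes, with the null-event convention used otherwise. Your explicit map $g_t$ and the averaging step simply make the paper's argument more precise.

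One small point, which the paper's proof shares: on a null prefix, the right-hand side does not vanish by the convention alone, since $\{S_t=s_t\}$ may still be non-null. It does vanish, either because $P_M(s_0)=0$ or because at the first null extension $(h_k,s_k,a_k,s_{k+1})$ of a positive prefix your Step~1 gives $P_M(a_k,s_{k+1}\mid s_k)=P_M(a_k,s_{k+1}\mid h_k,s_k)=0$.
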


\begin{proof}
At time $t$, the fresh exogenous variables $(U_t,\varepsilon_t^A,\varepsilon_{t+1}^S)$
are independent of the observed past through time $t$, by
Assumption~\ref{ass:npc}. The structural equations for $A_t$ and $S_{t+1}$
in~\eqref{eq:scm} depend on that past only through $S_t$. Consequently,
conditional on $S_t$, the law of $(A_t,S_{t+1})$ under $P_M$ does not
depend on the earlier observed history $(S_0,A_0,\ldots,S_{t-1},A_{t-1})$.
If every prefix of $\mathbf v$ has positive $P_M$-probability, applying
the ordinary chain rule to $P_M(\mathbf v)=P_M(s_0)\prod_{t=0}^{T-1}
P_M(a_t,s_{t+1}\mid s_0,a_0,\ldots,s_t)$ and using this conditional
independence at each factor gives $P_M(a_t,s_{t+1}\mid
s_0,a_0,\ldots,s_t)=P_M(a_t,s_{t+1}\mid s_t)$ for every $t$, which is
\eqref{eq:obs_factor}. If instead some prefix of $\mathbf v$ has
probability zero, then $P_M(\mathbf v)=0$, and the null-event convention
above makes the right-hand side of \eqref{eq:obs_factor} $0$ as well.
\end{proof}

\noindent The true structural kernel $\kappa_t(\cdot\mid s,a)$ is
\emph{identified} at a cell with $P(S_t=s,A_t=a)>0$ when it is
recoverable from the observational law there. When $A_t\leftrightarrow S_{t+1}$ is absent from $G$, compatibility
with $G$, together with Assumption~\ref{ass:npc}, implies
\[
\kappa_t(\cdot\mid s,a)=P(S_{t+1}\mid s,a)
\]
at every cell with $P(S_t=s,A_t=a)>0$. Under confounding, by contrast,
$\kappa_t(\cdot\mid s,a)$ need not coincide with $P(S_{t+1}\mid s,a)$,
or be recoverable from the observational law at all.
Section~\ref{sec:feasible} turns this identified/unidentified
distinction, together with the null-cell case, into an explicit
construction of feasible sets.

\section{Local Feasible Sets}\label{sec:feasible}

Section~\ref{sec:model} defined the local structural transition kernel
$\kappa_t(\cdot\mid s,a)$ at every state--action cell
(Definition~\ref{def:kernel}) and showed that it need not be
identified from the observational law under contemporaneous confounding.
This object is defined by the SCM and determines the post-intervention
transition whenever action $a$ is assigned at state $s$, without itself
depending on a particular regime $d$.
We represent what is known about this kernel at each cell (whether that knowledge fully identifies it or only constrains it) by a compact convex
feasible set $\cQ_t(s,a)$ of candidate next-state distributions, built
as a subset of $\Delta(\mathcal S_{t+1}(s,a))$. Throughout this section,
the local structural successor family $\mathbf S$ is fixed as part of
the maintained causal model class. Section~\ref{subsec:default} gives a
default construction from $P$, $G$, and $\mathbf S$;
Section~\ref{subsec:ambiguity} combines the cellwise sets into a single
\emph{interventional ambiguity set} over entire trajectories and shows
that the resulting set is \emph{rectangular}. For the default
construction itself, this ambiguity set is also \emph{sharp}:
Theorem~\ref{thm:sharp} shows that it coincides with the causal
identified set.

\subsection{Local Feasible Sets and the Default Construction}\label{subsec:default}

To place the baseline distribution $P(s_0)$ and the subsequent transition
kernels in a single common framework, it is convenient to adjoin a
deterministic \emph{root state} before $S_0$ is known. We set a dummy state at time $-1$: $\mathcal S_{-1}:=\{*\}$,
$S_{-1}\equiv *$. For $t=0,\ldots,T-1$, we set
$\mathcal A_t:=\cA$, and introduce the dummy action space
$\mathcal A_{-1}:=\{\star\}$. We also use the convention $d_{-1}(*)=\star$. 
For the root transition, we set
$\mathcal S_0(*,\star):=\mathcal S_0$. There is no treatment at time $-1$, and the ``transition''
from the dummy root state-action pair $(*,\star)$ to the initial state $S_0$ is simply the known baseline law
$P(S_0\in\cdot)$. Accordingly, we define the root transition kernel by
\[
\kappa_{-1}(\cdot\mid *,\star):=P(S_0\in\cdot).
\]
This adds no structural or causal assumption; the root state is
purely a bookkeeping device that represents the already-existing factor
$P(s_0)$ in Theorem~\ref{thm:factorization} as a one-step transition, and makes notation more uniform.


\begin{definition}\label{def:lfs}
For $t=-1,0,\ldots,T-1$, $s\in\mathcal S_t$, and
$a\in\mathcal A_t$, a \emph{feasible transition set} is a nonempty
compact convex set
$\cQ_t(s,a)\subseteq\Delta\bigl(\mathcal S_{t+1}(s,a)\bigr)$
satisfying the \emph{consistency condition}
\begin{equation}\label{eq:consistency}
\kappa_t(\cdot\mid s,a)\in\cQ_t(s,a).
\end{equation}
For $t\geq0$, $\kappa_t=\kappa_{M^\star,t}$ is the true SCM's
structural transition kernel
(Definition~\ref{def:kernel} and Remark~\ref{rem:true_scm}); at
$t=-1$, the convention above gives $\kappa_{-1}(\cdot\mid *,\star)=P(S_0\in\cdot)$.
We write
$ \cQ=
\bigl\{\cQ_t(s,a): t=-1,\ldots,T-1,\ s\in\mathcal S_t,\ a\in\mathcal A_t
\bigr\}$
for the full family of feasible transition sets.
\end{definition}

\noindent Given the observational law $P$, the causal graph $G$, and the fixed local structural successor family $\mathbf S$, a feasible set
satisfying Definition~\ref{def:lfs} can be produced by the following
default construction. For the root cell $(-1,*,\star)$ and the
treatment-dependent cells $(t,s,a)$ for $t\geq0$, the construction is as
follows:

\paragraph{Step 1: the root.}
At the root, $S_0$ is exogenous (equation~\eqref{eq:scm}) and hence
unaffected by any treatment or by any $U_t$, so $P^d(S_0)=P(S_0)$ is
always identified; the default construction accordingly sets
\[
\cQ_{-1}^{\mathrm{default}}(*,\star)
:=
\bigl\{P(S_0\in\cdot)\bigr\}.
\]


\paragraph{Step 2: check positivity.}
If $P(S_t=s,A_t=a)=0$, there is no data at this cell to rule out any
next-state distribution, and we set
$\cQ_t^{\mathrm{default}}(s,a)=\Delta(\mathcal{S}_{t+1}(s,a))$, the entire simplex.

\paragraph{Step 3: if positivity holds, check for confounding.}
If $P(s,a)>0$ and $A_t\leftrightarrow S_{t+1}$ is absent from the fixed
graph $G$ (Section~\ref{sec:model}) --- the same fact at every
$s\in\mathcal S_t$ --- then $P(S_{t+1}\mid s,a)$ is a well-defined factor
of the observational law (Proposition~\ref{prop:obs_factorization}) and
$\kappa_t(\cdot\mid s,a)=P(S_{t+1}\mid s,a)$
(Section~\ref{sec:factorization}), and we set
\[
\cQ_t^{\mathrm{default}}(s,a)=\bigl\{P(S_{t+1}\mid S_t=s,A_t=a)\bigr\}.
\]

\paragraph{Step 4: otherwise, construct the Manski polytope.}
If $P(s,a)>0$ and $A_t\leftrightarrow S_{t+1}$ is present, the
observational data identify the mass contributed by units with $A_t=a$,
while the counterfactual next-state distribution of units with $A_t\neq
a$ is,  in principle,  unrestricted over $\mathcal{S}_{t+1}(s,a)$ \citep{Manski1990}. We
therefore set $\cQ_t(s,a)$ to the Manski feasible set
\begin{equation}\label{eq:manski}
\cQ_t^{\mathrm{default}}(s,a)
=\bigl\{q\in\Delta(\mathcal{S}_{t+1}(s,a)):\underline\pi_t(s'\mid s,a)\leq q(s')\leq
\bar\pi_t(s'\mid s,a),\;\forall\,s'\in\mathcal{S}_{t+1}(s,a)\bigr\},
\end{equation}
where, for each $s'\in\mathcal{S}_{t+1}(s,a)$ 
\begin{align}
\underline\pi_t(s'\mid s,a)
  &= P(S_{t+1}=s',A_t=a\mid S_t=s),
  \label{eq:manski_lower}\\
\bar\pi_t(s'\mid s,a)
  &= P(S_{t+1}=s',A_t=a\mid S_t=s)+P(A_t\neq a\mid S_t=s).
  \label{eq:manski_upper}
\end{align}
The lower bound fixes the directly observed mass, while the upper bound
allows all remaining mass to be assigned to a given next state.


\noindent At every cell, $\cQ_t^{\mathrm{default}}(s,a)$ is the sharp identified
set for the single local kernel relative to the local observational
information and the maintained local structural successor set
$\mathcal S_{t+1}(s,a)$: trivially so in Steps~1--3, and by the
classical sharpness of the Manski bounds
\citep{Manski1990,Manski1995} in Step~4. Consequently, any proper
refinement
$\cQ_t(s,a)\subsetneq\cQ_t^{\mathrm{default}}(s,a)$ must encode
additional information or identifying assumptions.
The
full default family is
\[
\cQ^{\mathrm{default}}
=
\bigl\{
\cQ_t^{\mathrm{default}}(s,a):
t=-1,\ldots,T-1,\ 
s\in\mathcal S_t,\ 
a\in\mathcal A_t
\bigr\}.
\]

\begin{lemma}\label{lem:properties}
Under Assumption~\ref{ass:finite}, every default set produced by
Steps~1--4 is a nonempty compact convex polytope. For any closed convex
restrictions $C_1,\ldots,C_J$, intersecting any of these default sets
with $\bigcap_j C_j$ is compact and convex, nonempty if and only if
jointly feasible with the default set, and valid in the sense of
Definition~\ref{def:lfs} only if the true kernel also lies in every
$C_j$.
\end{lemma}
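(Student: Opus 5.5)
The plan is to go case by case through Steps~1--4 and then deduce the statement about intersections from general facts about finite-dimensional convex sets. Throughout, $\Delta(\mathcal S_{t+1}(s,a))$ is viewed as a subset of $\R^{\mathcal S_{t+1}(s,a)}$. This is a finite-dimensional space by Assumption~\ref{ass:finite}. In it the simplex is the intersection of finitely many closed half-spaces $q(s')\ge 0$ with the hyperplane $\sum_{s'}q(s')=1$, and it is bounded. It is therefore a compact convex polytope, and it is nonempty because $\mathcal S_{t+1}(s,a)$ is nonempty by Definition~\ref{def:structural_successors}. This disposes of Step~2. Steps~1 and~3 produce singletons, which are trivially compact convex polytopes. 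The only thing to check is that the singleton lies in the right simplex. For Step~1 this is immediate, since $\mathcal S_0(*,\star)=\mathcal S_0$. For Step~3 it follows from Remark~\ref{rem:observed_support_compatibility}: when $P(s,a)>0$, the support of $P(S_{t+1}\mid s,a)$ lies in $\mathcal S_{t+1}(s,a)$.

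Step~4 is the only real case. The set in \eqref{eq:manski} is the simplex cut by finitely many closed half-spaces $\underline\pi_t(s')\le q(s')\le\bar\pi_t(s')$. It is therefore a bounded polyhedron, hence a compact convex polytope. The main obstacle is nonemptiness. I would show it by writing down an explicit point:
\[
q^\circ(s') := P(S_{t+1}=s',A_t=a\mid S_t=s) + P(A_t\ne a\mid S_t=s)\,\ind\{s'=z\},
\]
where $z\in\mathcal S_{t+1}(s,a)$ is any fixed state. Nonnegativity of $q^\circ$ and the bounds $\underline\pi_t\le q^\circ\le\bar\pi_t$ are immediate. Summation needs a short argument. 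By Remark~\ref{rem:observed_support_compatibility}, all mass of $P(S_{t+1}\mid s,a)$ lies in $\mathcal S_{t+1}(s,a)$, so $\sum_{s'\in\mathcal S_{t+1}(s,a)}P(S_{t+1}=s',A_t=a\mid s)=P(A_t=a\mid s)$. Adding the extra term $P(A_t\ne a\mid s)$ then gives total mass $1$. I expect this support step to be the delicate point: without the compatibility remark, the lower bounds could carry mass outside $\mathcal S_{t+1}(s,a)$ and the polytope could be empty. An alternative route to nonemptiness is to use $M^\star$, since the consistency of $\kappa_t$ with \eqref{eq:manski} would also give a point. I prefer the explicit point because it avoids needing the sharpness argument here.

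For the second part, each $C_j$ is closed and convex, so $\bigcap_j C_j$ is closed and convex. Its intersection with a compact convex default set is then a closed subset of a compact set, hence compact, and an intersection of convex sets, hence convex. The statement that it is nonempty if and only if it is jointly feasible with the default set is a tautology, once joint feasibility is read as the existence of a common point. For validity, Definition~\ref{def:lfs} requires nonemptiness, compactness, convexity, and $\kappa_t(\cdot\mid s,a)\in\cQ_t(s,a)$. If the intersection is valid, then $\kappa_t$ lies in the intersection, so it lies in every $C_j$. This gives the ``only if'' direction as stated. I would add the converse in a remark: if $\kappa_t\in\bigcap_jC_j$, and the default set already contains $\kappa_t$ by the consistency discussion of Steps~1--4, then the intersection contains $\kappa_t$ and is therefore valid. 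The lemma only asserts the ``only if'' direction, so the proof needs nothing beyond that.
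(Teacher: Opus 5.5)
Your proof is correct. For Steps~1--3 and for the intersection clause it matches the paper's argument. The difference is in the one nontrivial point, nonemptiness of the Step~4 Manski polytope.

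The paper gets nonemptiness by observing that the true kernel $\kappa_t(\cdot\mid s,a)$ lies in every default set. It is supported on $\mathcal S_{t+1}(s,a)$ because $M^\star$ respects $\mathbf S$, and it ``satisfies the Manski bounds''. That last fact is only asserted in the lemma's proof. Its actual derivation is the forward-inclusion step in the proof of Theorem~\ref{thm:sharp}: $Z_a=f^S_{t+1}(s,a,U_t,\varepsilon^S_{t+1})$ is independent of $S_t$, and $Z_a=S_{t+1}$ on $\{A_t=a\}$. Strictly speaking, what you would have borrowed on the alternative route is this validity half of the appendix proof, not the sharpness half.

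You instead exhibit the vertex $q^\circ$, which places all the unobserved mass $P(A_t\neq a\mid S_t=s)$ on a single successor $z$. This needs only finiteness and Remark~\ref{rem:observed_support_compatibility}. It also shows exactly where support compatibility enters: the lower bounds must sum to $P(A_t=a\mid S_t=s)$ over $\mathcal S_{t+1}(s,a)$. Both routes ultimately rest on $\cM(P,G,\mathbf S)\neq\varnothing$, yours through the model used in that remark and the paper's through $M^\star$. Yours is more elementary and self-contained.

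What the paper's route buys is that the same observation also yields the consistency condition \eqref{eq:consistency} for the default family. That is, the default sets are valid feasible sets in the sense of Definition~\ref{def:lfs}. This is the fact behind the converse of the validity clause, which, as you correctly note, the lemma does not assert and your argument does not need.
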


\begin{proof}
Let $X$ denote the relevant finite successor space and $\kappa$ the true
kernel at that cell. By Assumption~\ref{ass:finite}, $X$ is finite, so
$\Delta(X)$ is compact and convex. At positive-probability cells,
Remark~\ref{rem:observed_support_compatibility} ensures that the observed
conditional distribution is supported on the corresponding local
structural successor set. Moreover, since $M^\star\in\cM(P,G,\mathbf S)$, the true structural kernel is supported on that set at every cell.
Each default set is therefore a nonempty intersection of $\Delta(X)$
with finitely many closed half-spaces: Steps~1 and~3 give the identified
singleton, Step~2 gives the whole simplex, and Step~4 imposes the Manski
bounds~\eqref{eq:manski_lower}--\eqref{eq:manski_upper}, which the true
kernel satisfies. Hence every default set is a nonempty compact convex
polytope. For the second: intersecting with closed convex$C_j$'s preserves closedness/convexity, compactness follows since the result is a closed subset of a compact set, nonemptiness is jointfeasibility, and validity needs $\kappa\in\bigcap_j C_j$ since $\kappa$ is already in the default set.
\end{proof}

\begin{remark}\label{rem:flexibility}
The default construction of Section~\ref{subsec:default} provides a
canonical family of feasible sets given the observational law $P$, the
causal graph $G$, and the maintained local structural successor family
$\mathbf S$. At every cell, it returns the sharp identified set for the
local structural kernel relative to the local observational information
and the maintained local structural successor set. This is not the only
way to specify feasible sets, however: the analyst may choose
$\cQ_t(s,a)$ differently depending on their objectives, subject only to
Definition~\ref{def:lfs}'s requirements.
In particular, the default set at a cell
may be narrowed by intersecting it with additional closed convex
restrictions (results from a randomised sub-study,  or a monotonicity or sign restriction on
$\kappa_t(\cdot\mid s,a)$); Lemma~\ref{lem:properties} guarantees the
intersected set remains compact and convex if the restrictions are  
compatible with the true kernel.
Alternatively, the feasible sets may be enlarged deliberately for
sensitivity analysis, and the right device depends on which kind of cell
is being widened.  
At a cell nominally treated as identified (Step 3), a Rosenbaum-style
sensitivity parameter $\Gamma\geq 1$ \citep{Rosenbaum1987} produces a
parametrized family of widening feasible sets as $\Gamma$ increases. 
At a cell already treated as confounded (Step~4), a neighborhood of the Manski
set itself, in the spirit of robust MDPs
\citep{Iyengar2005,NilimElGhaoui2005}, can serve the same purpose. Both types of enlargements provide sensitivity analyses by allowing a
wider range of candidate kernels while retaining the same local
structural successor sets. The root set $\cQ_{-1}(*,\star)$ also admits the same widening operation.
\end{remark}


\subsection{The Interventional Ambiguity Set and Rectangularity}\label{subsec:ambiguity}

\begin{definition}\label{def:ambiguity}
Fix $d\in\cD$ and a family
\[
\cQ=
\bigl\{
\cQ_t(s,a):
t=-1,\ldots,T-1,\ 
s\in\mathcal S_t,\ 
a\in\mathcal A_t
\bigr\}
\]
of feasible transition sets as in Definition~\ref{def:lfs}. The
\emph{interventional ambiguity set} $\cP^d(\cQ)$ is the collection of
all distributions $Q$ on $\mathbf V$ of the form
\begin{equation}\label{eq:ambiguity}
Q(\mathbf v)
=
q_{-1}(s_0\mid *,\star)
\prod_{t=0}^{T-1}
\ind\{a_t=d_t(s_t)\}\,
q_t(s_{t+1}\mid s_t,a_t),
\end{equation}
where, for every $t=-1,\ldots,T-1$ and $s\in\mathcal S_t$,
\[
q_t(\cdot\mid s,d_t(s))
\in
\cQ_t(s,d_t(s))
\subseteq
\Delta\bigl(\mathcal S_{t+1}(s,d_t(s))\bigr).
\]
\end{definition}

\noindent We suppress the argument $\cQ$ ($\cP^d(\cQ) \equiv \cP^d$) when the family of feasible sets is fixed,
and restore it when comparing different families. By the consistency
condition~\eqref{eq:consistency} and Theorem~\ref{thm:factorization}, the
true interventional law $P^d$ belongs to $\cP^d(\cQ)$. The independent
choice of one feasible kernel at each cell, including the root, gives the
rectangular structure that we formalize next.

\begin{proposition}[Rectangularity]\label{prop:rect}
$\cP^d(\cQ)$ is \emph{rectangular}: replacing the
kernel at any cell $(t,s,d_t(s))$, $t=-1,\ldots,T-1$, by any other
kernel from the same feasible set, while keeping all other kernels
fixed, yields another element of $\cP^d(\cQ)$.
\end{proposition}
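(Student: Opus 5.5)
The plan is to argue directly from Definition~\ref{def:ambiguity}, treating an element of $\cP^d(\cQ)$ as the image of a \emph{selection}, a choice of one kernel per cell. Fix $d$. I will first make explicit that every $Q\in\cP^d(\cQ)$ is generated by some family
\[
\mathbf q=\bigl(q_t(\cdot\mid s,d_t(s))\bigr)_{t=-1,\ldots,T-1,\ s\in\mathcal S_t},
\qquad q_t(\cdot\mid s,d_t(s))\in\cQ_t(s,d_t(s)),
\]
through formula~\eqref{eq:ambiguity}. Conversely, every such family defines an element of $\cP^d(\cQ)$. The admissible selections therefore form the Cartesian product $\prod_{t,s}\cQ_t(s,d_t(s))$, and $\cP^d(\cQ)$ is the image of this product under the map $\mathbf q\mapsto Q_{\mathbf q}$ defined by~\eqref{eq:ambiguity}. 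Under this reading, rectangularity is the statement that the constraint on each coordinate involves only that coordinate.

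Next comes the replacement step. Take $Q=Q_{\mathbf q}\in\cP^d(\cQ)$, a cell $(t_0,s_0',d_{t_0}(s_0'))$, and any $\tilde q\in\cQ_{t_0}(s_0',d_{t_0}(s_0'))$. Define $\tilde{\mathbf q}$ to agree with $\mathbf q$ at every cell except this one, where it equals $\tilde q$. Every coordinate of $\tilde{\mathbf q}$ still lies in its own feasible set: the unchanged coordinates satisfy the original constraints, and the new one lies in its set by choice. Hence $\tilde{\mathbf q}$ is an admissible selection and $Q_{\tilde{\mathbf q}}\in\cP^d(\cQ)$.

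It remains to check that $Q_{\tilde{\mathbf q}}$ is a genuine probability distribution on $\mathbf V$, so the construction is well-defined. Summing~\eqref{eq:ambiguity} backward over $s_T,a_{T-1},s_{T-1},\ldots$ gives total mass $1$. Each kernel is a probability vector on $\mathcal S_{t+1}(s,a)\subseteq\mathcal S_{t+1}$, and the indicator selects exactly one action per state. Nonnegativity is immediate, and the point mass on $Y$ is implicit.

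I expect the only real obstacle to be the interpretive one: $Q$ may be generated by several selections. Kernels at cells unreachable under $Q$ are not determined by $Q$ itself. The statement must therefore be read as a property of the parametrization by selections, and any generating selection can be used. Once this is fixed, the argument is immediate. Independence across cells holds because Definition~\ref{def:ambiguity} imposes no cross-cell coupling, and the root cell $t=-1$ is handled identically.
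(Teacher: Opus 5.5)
Your proposal is correct and takes the same approach as the paper, whose proof is just ``Immediate from Definition~\ref{def:ambiguity}''; you spell out what that sentence leaves implicit, namely that $\cP^d(\cQ)$ is the image of the product $\prod_{t,s}\cQ_t(s,d_t(s))$ and that the constraint on each coordinate involves only that coordinate. Your point that a given $Q$ may be generated by several selections, because kernels at unreachable cells are not pinned down by $Q$, is a useful clarification that the statement concerns the parametrization by selections.
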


\begin{proof}
Immediate from Definition \ref{def:ambiguity}.
\end{proof}

\noindent This is the $(s,a)$-rectangularity condition of \citet{Iyengar2005}; it
permits the trajectory-level optimisation to decompose cell by cell in
Section~\ref{sec:bounds}.

\begin{definition}[Causal identified set]\label{def:identified_set}
Fix $d\in\cD$. The \emph{causal identified set} for $P^d$ is
\[
\cI^d(P,G,\mathbf S)
:=
\{P_M^d:M\in\cM(P,G,\mathbf S)\},
\]
where $\cM(P,G,\mathbf S)$ is the admissible SCM class of
Definition~\ref{def:admissible_class}.
\end{definition}

\begin{remark}[Structural successor sets and feasible-set modifications] \label{rem:rect_justification}
The local structural successor family $\mathbf S$ is part of the
maintained SCM class. Consequently, enlarging $\mathbf S$ weakens the
structural model and may enlarge both $\cM(P,G,\mathbf S)$ and the
causal identified set $\cI^d(P,G,\mathbf S)$. This is distinct from modifying the feasible sets $\cQ_t(s,a)$ while
holding $P$, $G$, and $\mathbf S$ fixed. Theorem~\ref{thm:sharp} below
shows that the unmodified default family yields exactly
$\cI^d(P,G,\mathbf S)$. Narrowed, enlarged, or statistical feasible
sets remain legitimate rectangular ambiguity classes for the dynamic
program, but need not coincide with the causal identified set.
\end{remark}

\begin{theorem}[Exact Sharpness under Contemporaneous Confounding]%
\label{thm:sharp}
Fix $d\in\cD$ and a local structural successor family $\mathbf S$
satisfying Assumption~\ref{ass:finite}, and let
$\cQ^{\mathrm{default}}$ be the default family constructed from
$P$, $G$, and $\mathbf S$. Under the maintained
model class ~\eqref{eq:scm} and Assumptions~\ref{ass:npc}
and~\ref{ass:modularity},
\[
\cP^d(\cQ^{\mathrm{default}})
=
\cI^d(P,G,\mathbf S).
\]
\end{theorem}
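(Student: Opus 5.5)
We prove the two inclusions separately. For $\cI^d(P,G,\mathbf S)\subseteq\cP^d(\cQ^{\mathrm{default}})$, fix $M\in\cM(P,G,\mathbf S)$. By Theorem~\ref{thm:factorization}, $P_M^d$ has the form \eqref{eq:ambiguity} with $q_{-1}=P(S_0\in\cdot)$ and $q_t(\cdot\mid s,d_t(s))=\kappa_{M,t}(\cdot\mid s,d_t(s))$. It therefore suffices to show that $\kappa_{M,t}(\cdot\mid s,a)\in\cQ_t^{\mathrm{default}}(s,a)$ at every cell for \emph{every} admissible $M$, not only for $M^\star$. We split into Steps~1--4.

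\begin{itemize}
\item \textbf{Root.} Since $P_M=P$, we have $\kappa_{-1}=P(S_0\in\cdot)$.
\item \textbf{Null cells.} Support in $\mathcal S_{t+1}(s,a)$ is just the successor-set restriction.
\item \textbf{Unconfounded positive cells.} Graph compatibility makes $U_t$ enter at most one of the two equations. Together with $U_t,\varepsilon^A_t,\varepsilon^S_{t+1}\perp S_t$ (Assumption~\ref{ass:npc}), this gives $\kappa_{M,t}(\cdot\mid s,a)=P(S_{t+1}\mid s,a)$.
\item \textbf{Confounded positive cells.} By independence of the fresh noise from $S_t$,
\[\kappa_{M,t}(s'\mid s,a)=Q_M\bigl(f^S_{t+1}(s,a,U_t,\varepsilon^S_{t+1})=s',\,f^A_t(s,U_t,\varepsilon^A_t)=a\bigr)+Q_M\bigl(f^S_{t+1}(s,a,U_t,\varepsilon^S_{t+1})=s',\,f^A_t(s,U_t,\varepsilon^A_t)\neq a\bigr).\]
The first term equals $P(S_{t+1}=s',A_t=a\mid S_t=s)$. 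The second term lies in $[0,P(A_t\neq a\mid S_t=s)]$. This is exactly the Manski bound \eqref{eq:manski}.
\end{itemize}

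For the reverse inclusion, take $Q\in\cP^d(\cQ^{\mathrm{default}})$ with kernels $q_t(\cdot\mid s,d_t(s))$. We construct $M\in\cM(P,G,\mathbf S)$ with $\kappa_{M,t}(\cdot\mid s,d_t(s))=q_t(\cdot\mid s,d_t(s))$ at every $(t,s)$. Theorem~\ref{thm:factorization} then gives $P_M^d=Q$. Only the kernels at cells $(s,d_t(s))$ matter for $P^d_M$, and the construction is statewise and stagewise, so the cells are treated independently.

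\begin{itemize}
\item \textbf{Initial state.} Take $S_0=f^S_0(\varepsilon^S_0)$ with law $P(S_0)$.
\item \textbf{Latent variable.} At stage $t$, let $U_t$ be a uniform variable on $[0,1]$, or a finite product of independent components, one block per state $s\in\mathcal S_t$.
\item \textbf{Observational joint.} For each $s$, the equations should reproduce the observational joint $P(A_t=a,S_{t+1}=s'\mid S_t=s)$ while realizing prescribed counterfactual kernels. Concretely, using a single uniform $U_t$, set $A_t=f^A_t(s,U_t)$ to reproduce $P(A_t\mid S_t=s)$ by the inverse CDF.
\item \textbf{Next state on $\{A_t=a\}$.} For every $a$ (not only $d_t(s)$), define $f^S_{t+1}(s,a,U_t,\varepsilon)$ on the event $\{f^A_t(s,U_t)=a\}$ to have conditional law $P(S_{t+1}\mid s,a)$. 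This uses a second independent uniform, or a further split of $U_t$.
\item \textbf{Next state on $\{A_t\neq a\}$.} For $a=d_t(s)$, define $f^S_{t+1}(s,a,\cdot)$ on $\{f^A_t(s,U_t)\neq a\}$ to have the residual law
\[
r(s')=\frac{q_t(s'\mid s,a)-P(s',a\mid s)}{P(A_t\neq a\mid s)}.
\]
This $r$ is a probability vector on $\mathcal S_{t+1}(s,a)$ precisely because $q_t$ satisfies \eqref{eq:manski}. When $P(A_t\neq a\mid s)=0$, the Manski set is a singleton and nothing is needed. For other $a$, any law supported in $\mathcal S_{t+1}(s,a)$ will do.
\item \textbf{Null and unconfounded cells.} At null cells, send the whole mass of $f^S_{t+1}(s,a,\cdot)$ to $q_t$; this does not affect $P_M$. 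At unconfounded cells, the kernel is the singleton $P(S_{t+1}\mid s,a)$, so $f^S_{t+1}$ can be made to depend only on $(s,a,\varepsilon^S_{t+1})$ and not on $U_t$, respecting the absent edge. In this case put $U_t$'s influence only in $f^A_t$, or make $U_t$ trivial.
\end{itemize}

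It remains to verify that $P_M=P$ via Proposition~\ref{prop:obs_factorization}: stage $t$ reproduces $P(a_t,s_{t+1}\mid s_t)$ for every $s_t$, including states with $P(S_t=s_t)=0$, where the choice is irrelevant. We also need Assumption~\ref{ass:npc} to hold, with all noises independent across stages, and the successor sets to be respected. Remark~\ref{rem:observed_support_compatibility} ensures that the observed conditionals are supported in $\mathcal S_{t+1}(s,a)$.

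The main obstacle is this reverse construction. A single $U_t$ must serve all states $s$ and all actions $a$ simultaneously, and it must stay independent of $S_t$ and of the past. Independence from $S_t$ holds automatically, since $U_t$ is exogenous and all $s$-dependence is pushed into the functions $f^A_t$ and $f^S_{t+1}$. One must still check that prescribing the counterfactual law only on the complement event $\{A_t\neq a\}$, separately for each $a$, is consistent. It is, because for fixed $(s,a)$ the function $f^S_{t+1}(s,a,\cdot)$ is a separate function whose behaviour on $\{A_t=a\}$ and $\{A_t\neq a\}$ is chosen independently. I would make this explicit by taking $U_t=(V_t,(W_{t,s,a})_{s,a})$ with independent uniforms, $A_t=F^{-1}_{s}(V_t)$, and $S_{t+1}$ drawn from $W_{t,s,a}$ through the appropriate conditional quantile depending on whether $F^{-1}_s(V_t)=a$.
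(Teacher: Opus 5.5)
Your proposal is correct and takes essentially the same route as the paper. The forward inclusion decomposes $\kappa_{M,t}(\cdot\mid s,a)$ over $\{A_t=a\}$ and its complement to obtain the Manski bounds for every admissible $M$. The reverse inclusion uses a shared stagewise latent (your $(V_t,(W_{t,s,a})_{s,a})$, the paper's $(B,(Z_a)_{a\in\cA})$) whose response under $d_t(s)$ follows the observed conditional on $\{A_t=d_t(s)\}$ and the residual law $\bigl(q-P(\cdot,a\mid s)\bigr)/P(A_t\neq a\mid s)$ off it, pasted across states and stages as in the paper's local realizability lemma; the one step you leave implicit, that your arbitrary choices at states with $P(S_t=s)=0$ are harmless, needs the short induction on prefixes (as in the paper) showing such states are also null under $P_M$.
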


\begin{proof}
See Appendix~\ref{app:thm_sharp}.
\end{proof}

\noindent The equality is specific to the unmodified default family. For narrowed or
enlarged feasible sets (Remark ~\ref{rem:flexibility}), the dynamic program remains exact for the
specified rectangular class $\cP^d(\cQ)$ (Remark~\ref{cor:sharp_class}),
but that class need not coincide with the causal identified set.

\section{Causal Bounds and Optimal Regime Selection via Backward Induction}\label{sec:bounds}

Definition~\ref{def:lfs} in Section~\ref{sec:feasible} attached a feasible set $\cQ_t(s,a)$ to every
cell $(t,s,a)$. For a given regime $d$, different feasible choices of
the transition kernels may produce different expected outcomes. In
Subsection~\ref{subsec:evaluation} we compute the smallest and largest of
these outcomes by backward induction. To choose among regimes, we must then compare the resulting ranges. In
Subsection~\ref{subsec:selection} we use the maximin criterion, choosing
the regime with the largest worst-case expected outcome.

\subsection{Regime Evaluation: Causal Bounds and Backward Induction}\label{subsec:evaluation}

For a given regime $d$, the interventional ambiguity set
$\cP^d(\cQ)$, defined in Definition~\ref{def:ambiguity}, consists of the
trajectory laws obtained by choosing the relevant transition kernels from
their feasible sets. We now define the smallest and largest expected
outcomes over this set:

\begin{definition}[Causal bounds]\label{def:bounds}
For a family $\cQ$ of local feasible sets,
\[
\bar\theta^d(\cQ)=\sup_{Q\in\cP^d(\cQ)}\E^Q[Y],\qquad
\underline\theta^d(\cQ)=\inf_{Q\in\cP^d(\cQ)}\E^Q[Y].
\]
As with $\cP^d$, we write $\bar\theta^d,\underline\theta^d$ for
$\bar\theta^d(\cQ),\underline\theta^d(\cQ)$ when $\cQ$ is fixed.
\end{definition}
\noindent Below we rely on the notation $V^d = \E[Y^d]$.
\begin{proposition}\label{prop:basic}
Under Assumptions~\ref{ass:modularity}--\ref{ass:finite}:
$(a)$ $\bar\theta^d$ and $\underline\theta^d$ are finite and attained;
$(b)$ $\underline\theta^d\leq V^d\leq\bar\theta^d$;
$(c)$ when all feasible sets are singletons,
$\bar\theta^d=\underline\theta^d=V^d$.
\end{proposition}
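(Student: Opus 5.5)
The plan is to reduce all three claims to properties of the map $\Psi:(q_t(\cdot\mid s,d_t(s)))_{t,s}\mapsto \E^Q[Y]$ on a finite product of compact sets. By Definition~\ref{def:ambiguity}, an element $Q\in\cP^d(\cQ)$ is determined by choosing, for each $t\in\{-1,\ldots,T-1\}$ and each $s\in\mathcal S_t$, one kernel $q_t(\cdot\mid s,d_t(s))$ from $\cQ_t(s,d_t(s))$. Under Assumption~\ref{ass:finite}, there are finitely many such cells. Each feasible set is a nonempty compact subset of a finite-dimensional simplex (Definition~\ref{def:lfs}). Hence the parameter space
\[
\Theta^d:=\prod_{t=-1}^{T-1}\prod_{s\in\mathcal S_t}\cQ_t(s,d_t(s))
\]
is nonempty and compact by Tychonoff, here just a finite product in Euclidean space.

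For part $(a)$, we write $\E^Q[Y]=\sum_{\mathbf v} y(s_T)\,Q(\mathbf v)$. This is a finite sum, by Assumption~\ref{ass:finite}, of terms each of which is a product of coordinates of the kernels times indicators, by \eqref{eq:ambiguity}. So $\Psi$ is a polynomial in the coordinates, hence continuous on $\Theta^d$. It is also bounded by $\max_{\mathcal S_T}|y|<\infty$, since $Q$ is a probability distribution: summing \eqref{eq:ambiguity} over $s_T$, then $a_{T-1}$, and so on telescopes to $1$. The supremum and infimum over $\cP^d(\cQ)$ equal the max and min of $\Psi$ over $\Theta^d$. By Weierstrass these are finite and attained at some parameter, hence at some $Q\in\cP^d(\cQ)$. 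One subtlety is that different parameter choices can produce the same $Q$, for example when kernels sit at unreachable states. This is harmless because we only need the image of $\Theta^d$ to be $\cP^d(\cQ)$, which holds by definition.

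For part $(b)$, we use the remark after Definition~\ref{def:ambiguity}. By Theorem~\ref{thm:factorization} applied to $M^\star$ (Remark~\ref{rem:factorization_suppress}), $P^d(\mathbf v)=P(s_0)\prod_t\ind\{a_t=d_t(s_t)\}\kappa_t(s_{t+1}\mid s_t,a_t)$. The consistency condition \eqref{eq:consistency} gives $\kappa_t(\cdot\mid s,d_t(s))\in\cQ_t(s,d_t(s))$ for every $t\ge0$. At the root, $\kappa_{-1}(\cdot\mid *,\star)=P(S_0\in\cdot)\in\cQ_{-1}(*,\star)$. Therefore $P^d\in\cP^d(\cQ)$, and $V^d=\E^{P^d}[Y]$ lies between the inf and the sup.

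For part $(c)$, if every feasible set is a singleton then $\Theta^d$ is a single point. So $\cP^d(\cQ)$ is a singleton, which by $(b)$ must be $\{P^d\}$, giving $\bar\theta^d=\underline\theta^d=V^d$. The only step needing care is the continuity and well-definedness in $(a)$, specifically that the finite-dimensional product structure makes compactness immediate. There is no substantive obstacle; the rest is bookkeeping with the root-state convention.
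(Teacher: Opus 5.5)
Your proposal is correct and follows essentially the same route as the paper: compactness of the finite product $\prod_{t,s}\cQ_t(s,d_t(s))$ together with continuity of the polynomial map to $\E^Q[Y]$ for (a), membership $P^d\in\cP^d(\cQ)$ via the consistency condition and Theorem~\ref{thm:factorization} for (b), and the collapse of $\cP^d(\cQ)$ to $\{P^d\}$ for (c). Your extra checks are sound refinements of details the paper leaves implicit: you verify that each $Q$ is a genuine probability law, and you note that non-injectivity of the parametrization is harmless.
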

\begin{proof}
For (a), Assumption~\ref{ass:finite} makes the state and action spaces
finite, and hence there are only finitely many relevant cells. Moreover,
$Y$ is bounded since $\mathcal Y$ is finite. By
Definition~\ref{def:lfs}, each $\cQ_t(s,a)$ is compact. Therefore
\[
\prod_{t=-1}^{T-1}\prod_{s\in\mathcal S_t}
\cQ_t(s,d_t(s))
\]
is compact. By Definition~\ref{def:ambiguity}, each element of this
product determines a trajectory law $Q\in\cP^d$, and $\E^Q[Y]$ is a
finite sum of products of the selected kernel probabilities. It is
therefore a continuous function of those kernel probabilities. Hence its
maximum and minimum are attained, proving (a).
For (b), the consistency condition~\eqref{eq:consistency}, together with
Theorem~\ref{thm:factorization}, gives $P^d\in\cP^d$. Thus
\[
\underline\theta^d\leq \E^{P^d}[Y]=V^d\leq\bar\theta^d.
\]
For (c), if every feasible set is a singleton, consistency forces its
unique element to be the corresponding true kernel, 
including the kernel $\kappa_{-1}(\cdot\mid *,\star)=P(S_0\in\cdot)$
Hence $\cP^d=\{P^d\}$, so
$\bar\theta^d=\underline\theta^d=V^d$.
\end{proof}

\noindent Rather than optimise over all trajectory laws in $\cP^d(\cQ)$ at once,
we can use rectangularity (Proposition~\ref{prop:rect}) to work
backwards, one time step at a time. For this purpose we define the
possible future laws and expected outcomes starting from any state
$s\in\mathcal S_t$, including states that have zero probability under
the baseline law.

\begin{definition}[Continuation ambiguity sets and value functions]
\label{def:vf}
Fix $d\in\cD$ and a family $\cQ$ of feasible transition sets. For
$t\in\{-1,0,\ldots,T-1\}$ and $s\in\mathcal S_t$, let
$\cP_t^d(s;\cQ)$ be the collection of continuation laws of
$(S_{t+1},\ldots,S_T)$, initiated from $S_t=s$, of the form
\begin{equation}\label{eq:continuation}
Q(s_{t+1},\ldots,s_T)
=
\prod_{k=t}^{T-1}
q_k\bigl(s_{k+1}\mid s_k,d_k(s_k)\bigr),
\end{equation}
where $s_t:=s$ and, for every $k=t,\ldots,T-1$ and
$x\in\mathcal S_k$,
\[
q_k(\cdot\mid x,d_k(x))
\in
\cQ_k(x,d_k(x)).
\]
Thus the continuation law is defined directly from the
selected kernels starting at the fixed state $s$, rather than by
conditioning a pre-existing joint law on $\{S_t=s\}$. It is therefore
well defined even when $s$ has zero probability.
At the terminal time, set
\[
\cP_T^d(s;\cQ):=\{\delta_s\}.
\]
For $t=-1,0,\ldots,T$ and $s\in\mathcal S_t$, define
\[
\overline V_t^d(s;\cQ)
:=
\sup_{Q\in\cP_t^d(s;\cQ)}\E^Q[Y],
\qquad
\underline V_t^d(s;\cQ)
:=
\inf_{Q\in\cP_t^d(s;\cQ)}\E^Q[Y].
\]
When $\cQ$ is fixed, we write
$\overline V_t^d(s)$ and $\underline V_t^d(s)$.
\end{definition}
\noindent At the root, the continuation law has the form
\[
Q(s_0,\ldots,s_T)
=
q_{-1}(s_0\mid *,\star)
\prod_{k=0}^{T-1}
q_k\bigl(s_{k+1}\mid s_k,d_k(s_k)\bigr).
\]

\noindent For the same choice of kernels, Definition~\ref{def:ambiguity} gives a
unique law in $\cP^d(\cQ)$ whose state marginal is precisely this
continuation law: the actions satisfy $A_t=d_t(S_t)$ deterministically.
Conversely, any law in $\cP_{-1}^d(*;\cQ)$ determines a unique law in
$\cP^d(\cQ)$ by adjoining these deterministic actions and
$Y=y(S_T)$.
Thus $\cP_{-1}^d(*;\cQ)$ is in one-to-one correspondence with
$\cP^d(\cQ)$ through the state marginal. Since $Y=y(S_T)$,
corresponding laws have the same distribution of $Y$ and therefore the
same expected outcome. Hence, directly from Definition~\ref{def:bounds},
\[
\overline V_{-1}^d(*;\cQ)=\bar\theta^d(\cQ),\qquad
\underline V_{-1}^d(*;\cQ)=\underline\theta^d(\cQ).
\]

\begin{theorem}[Backward induction]\label{thm:dp}
Fix a family $\cQ$ of feasible transition sets. The value functions of
Definition~\ref{def:vf} are computed exactly by the following backward
recursion.
At $t=T$,
\[
\overline V_T^d(s)
=
\underline V_T^d(s)
=
y(s),
\qquad s\in\mathcal S_T.
\]
For $t=T-1,\ldots,-1$ and every $s\in\mathcal S_t$,
\begin{align}
\overline V_t^d(s)
&=
\max_{q\in\cQ_t(s,d_t(s))}
\sum_{s'\in\mathcal S_{t+1}(s,d_t(s))}
q(s')\,\overline V_{t+1}^d(s'),
\label{eq:upper_rec}\\[3pt]
\underline V_t^d(s)
&=
\min_{q\in\cQ_t(s,d_t(s))}
\sum_{s'\in\mathcal S_{t+1}(s,d_t(s))}
q(s')\,\underline V_{t+1}^d(s').
\label{eq:lower_rec}
\end{align}
Consequently,
\[
\overline V_{-1}^d(*)=\bar\theta^d,
\qquad
\underline V_{-1}^d(*)=\underline\theta^d.
\]
Under the default construction,
$\cQ_{-1}(*,\star)=\{P(S_0\in\cdot)\}$, so the $t=-1$ instance of the
recursion reduces to
\[
\overline V_{-1}^d(*)
=
\sum_{s_0\in\mathcal S_0}P(s_0)\overline V_0^d(s_0),
\qquad
\underline V_{-1}^d(*)
=
\sum_{s_0\in\mathcal S_0}P(s_0)\underline V_0^d(s_0).
\]
\end{theorem}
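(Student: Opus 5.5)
The proof runs by backward induction on $t$, from $t=T$ down to $t=-1$, and handles the supremum and the infimum in the same way. I describe the upper value $\overline V_t^d$; the lower value follows by replacing max with min. At $t=T$ we have $\cP_T^d(s)=\{\delta_s\}$, so $\overline V_T^d(s)=y(s)$ directly.

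For the inductive step, fix $t\le T-1$ and $s\in\mathcal S_t$. Every continuation law in $\cP_t^d(s)$ decomposes, via \eqref{eq:continuation}, as
\[
Q(s_{t+1},\ldots,s_T)=q_t(s_{t+1}\mid s,d_t(s))\,Q_{s_{t+1}}(s_{t+2},\ldots,s_T),
\]
where $Q_{s'}\in\cP_{t+1}^d(s';\cQ)$ is built from the kernels $q_k$ with $k\ge t+1$. Hence
\[
\E^Q[Y]=\sum_{s'}q_t(s'\mid s,d_t(s))\,\E^{Q_{s'}}[Y].
\]
The sum ranges over $s'\in\mathcal S_{t+1}(s,d_t(s))$, since $q_t$ is supported there.

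The upper bound "$\le$" is immediate. Each $\E^{Q_{s'}}[Y]\le\overline V_{t+1}^d(s')$ and the weights are nonnegative, so $\E^Q[Y]$ is at most the right-hand side of \eqref{eq:upper_rec}. This maximum exists because $q\mapsto\sum_{s'} q(s')\overline V_{t+1}^d(s')$ is linear on the compact set $\cQ_t(s,d_t(s))$, and $\overline V_{t+1}^d$ is finite by induction.

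The main point is the reverse inequality "$\ge$", and this is where rectangularity (Proposition~\ref{prop:rect}) enters. By induction, each $\overline V_{t+1}^d(s')$ is attained by some choice of kernels $\{q^{(s')}_k\}_{k\ge t+1}$. I would strengthen the induction hypothesis to say that a single kernel selection $x\mapsto q_k^*(\cdot\mid x,d_k(x))$, for $k\ge t+1$, simultaneously attains $\overline V_{t+1}^d(s')$ for every $s'\in\mathcal S_{t+1}$. This is the usual Markov-optimality statement. It is needed because continuation laws starting from different $s'$ share kernels at common later states, so the per-$s'$ maximizers must be compatible. Such a selection exists by construction: at each $(k,x)$, choose $q_k^*(\cdot\mid x,d_k(x))$ as a maximizer of the recursion \eqref{eq:upper_rec} with continuation values $\overline V_{k+1}^d$. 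One then checks by induction that this fixed selection yields the value $\overline V_k^d(x)$ at every state simultaneously. Combining $q_t^*(\cdot\mid s,d_t(s))$ with these kernels gives a law in $\cP_t^d(s)$, because kernels can be chosen independently at each cell. Its expectation equals the right-hand side of \eqref{eq:upper_rec}, which closes the induction.

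Finally, the identities $\overline V_{-1}^d(*)=\bar\theta^d$ and $\underline V_{-1}^d(*)=\underline\theta^d$ come from the one-to-one correspondence between $\cP_{-1}^d(*;\cQ)$ and $\cP^d(\cQ)$ stated just before the theorem. Under the default construction $\cQ_{-1}(*,\star)$ is the singleton $\{P(S_0\in\cdot)\}$, so the max and min at $t=-1$ reduce to $\sum_{s_0}P(s_0)\overline V_0^d(s_0)$ and $\sum_{s_0}P(s_0)\underline V_0^d(s_0)$.
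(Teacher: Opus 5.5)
Your proposal is correct and follows essentially the same route as the paper. Both arguments use downward induction with the strengthened hypothesis that a single kernel selection attains $\overline V_\tau^d(x)$ at every later $(\tau,x)$ simultaneously, the one-step tower decomposition for the upper bound, and a stage-$t$ maximizer prepended to that selection (chosen independently at each state by rectangularity) to show the bound is attained; the identification with $\bar\theta^d,\underline\theta^d$ and the default-root reduction are handled as in the paper.
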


\begin{proof}
We prove the upper recursion; the lower recursion is identical with
maxima replaced by minima throughout. 
For $t=-1,\ldots,T-1$ write $\cQ_t^d(s):=\cQ_t(s,d_t(s))$, 
and $q_t(\cdot\mid s):=q_t(\cdot\mid s,d_t(s))$. When convenient,
extend this distribution by zero from
$\mathcal S_{t+1}(s,d_t(s))$ to the whole state space
$\mathcal S_{t+1}$.
Every $Q\in\cP_t^d(s;\cQ)$
is built from a \emph{continuation family}
$q=\{q_k(\cdot\mid x,d_k(x))\in\cQ_k(x,d_k(x))\}_{k=t,\ldots,T-1;\,x\in\mathcal
S_k}$; write $\E_q^{t,s}[\,\cdot\,]$ for the expectation under the resulting
law, so that $\overline V_t^d(s;\cQ)=\sup_q\E_q^{t,s}[y(S_T)]$. By the
product form~\eqref{eq:continuation}, having stepped to $S_{t+1}=s'$ the
remaining evolution under $q$ is governed by exactly the same tail of
$q$, so for $t=-1,\ldots,T-1$
\begin{equation}\label{eq:tower_q}
\E_q^{t,s}[y(S_T)]=\sum_{s'\in\mathcal S_{t+1}}q_t(s'\mid s)\,\E_q^{t+1,s'}[y(S_T)].
\end{equation}
We argue by downward induction on $t=T,T-1,\ldots,-1$, carrying the
strengthened claim: \emph{there is a continuation family $q^*$ from $t$
such that $\E_{q^*}^{\tau,x}[y(S_T)]=\overline V_\tau^d(x)$ for every
$\tau\geq t$ and every $x\in\mathcal S_\tau$.} At $t=T$ this is immediate,
since $\cP_T^d(x;\cQ)=\{\delta_x\}$ gives $\overline V_T^d(x)=y(x)$ for
every $x\in\mathcal S_T$.

\noindent Suppose the claim holds at $t+1$, with attaining family $q^*$, and fix
$s\in\mathcal S_t$. For \emph{any} continuation family $q$ from $(t,s)$,
\eqref{eq:tower_q} together with $\E_q^{t+1,s'}[y(S_T)]\leq\overline
V_{t+1}^d(s')$ --- valid by definition of $\overline V_{t+1}^d$ as a
supremum, regardless of whether $q$'s tail is $q^*$ --- gives
\begin{equation}\label{eq:one_bound}
\E_q^{t,s}[y(S_T)]=\sum_{s'}q_t(s'\mid s)\,\E_q^{t+1,s'}[y(S_T)]
\;\leq\;\sum_{s'}q_t(s'\mid s)\,\overline V_{t+1}^d(s')
\;\leq\;\max_{r\in\cQ_t^d(s)}\sum_{s'}r(s')\,\overline V_{t+1}^d(s'),
\end{equation}
the last step because $q_t(\cdot\mid s)\in\cQ_t^d(s)$. Taking the
supremum over $q$ on the left shows $\overline V_t^d(s)$ is bounded above
by the maximum on the right, which is attained --- $\cQ_t^d(s)$ is
compact and $r\mapsto\sum_{s'}r(s')\overline V_{t+1}^d(s')$ is linear,
hence continuous. Fix a maximizer $r_t^*(\cdot\mid s)$.

\noindent It remains to show this upper bound is attained, which will give equality
directly, with no separate lower bound to prove. Let $\tilde q$ be
$r_t^*(\cdot\mid s)$ prepended to $q^*$: a continuation family from
$(t,s)$ whose tail from $t+1$ is exactly $q^*$. By~\eqref{eq:tower_q},
and because the induction hypothesis gives
$\E_{q^*}^{t+1,s'}[y(S_T)]=\overline V_{t+1}^d(s')$ at every $s'$,
\[
\E_{\tilde q}^{t,s}[y(S_T)]=\sum_{s'}r_t^*(s'\mid s)\,\E_{q^*}^{t+1,s'}[y(S_T)]
=\sum_{s'}r_t^*(s'\mid s)\,\overline V_{t+1}^d(s')
=\max_{r\in\cQ_t^d(s)}\sum_{s'}r(s')\,\overline V_{t+1}^d(s'),
\]
so $\tilde q$ attains the bound~\eqref{eq:one_bound}. Since
$\overline V_t^d(s)=\sup_q\E_q^{t,s}[y(S_T)]\geq\E_{\tilde
q}^{t,s}[y(S_T)]$, combining with the upper bound already established
gives
\[
\overline V_t^d(s)=\max_{r\in\cQ_t^d(s)}\sum_{s'}r(s')\,\overline V_{t+1}^d(s'),
\]
which is~\eqref{eq:upper_rec} for $t=-1,\ldots,T-1$

\noindent Rectangularity (Proposition~\ref{prop:rect}) permits choosing
$r_t^*(\cdot\mid x)$ this way independently at every $x\in\mathcal S_t$,
root included: this extends $q^*$ to a family valid from $t$, attaining
$\overline V_\tau^d(x)$ for every $\tau\geq t$ and every $x$, since
$\E_{q^*}^{\tau,x}$ depends only on $q^*$'s tail from $\tau$, unaffected
by prepending the new kernels at $t$. This completes the induction down
to $t=-1$. 
\end{proof}


\begin{remark}[Properties of the backward induction]
Comments on  Theorem~\ref{thm:dp}.

\emph{(a)}\label{cor:sharp_class}\label{rem:two_claims}
Theorem~\ref{thm:dp} holds for \emph{any} family $\cQ$ of local feasible
sets satisfying Definition~\ref{def:lfs} (default, narrowed, or enlarged):
it returns the exact bounds
$[\underline\theta^d(\cQ),\bar\theta^d(\cQ)]$ for
$\cP^d(\cQ)$ regardless of how $\cQ$ was chosen. This is a statement
about the dynamic program and the specified rectangular ambiguity class.
When $\cP^d(\cQ)$ coincides with the causal identified set
$\cI^d(P,G,\mathbf S)$, the resulting bounds are also sharp for the
causal model. By Theorem~\ref{thm:sharp}, this holds for the
unmodified default family constructed from $P$, $G$, and $\mathbf S$
(Remark~\ref{rem:rect_justification}).

\emph{(b)}\label{rem:cell_certificates} At no additional cost,  this recursion 
produces the conditional intervals
$[\underline V_t^d(s),\overline V_t^d(s)]$ at every cell.
A degenerate interval, $\overline V_t^d(s)=\underline
V_t^d(s)$, means every continuation law in $\cP_t^d(s;\cQ)$ gives the
same remaining expected outcome from $(t,s)$. Similar conclusions hold
when the width is small. These widths are therefore informative about
where uncertainty in the expected outcome under the given regime remains:
a cell with no width has an already determined continuation value, even
if its underlying transition kernel is not itself identified.
To our knowledge, existing partial-identification and DTR methods do not
offer a general cell-level certificate of this kind. The granularity is
close in spirit to \citet{ChenDarwiche2025}, who show a state-specific
causal effect can be identified even when the corresponding variable-level
effect is not; here the same phenomenon occurs across cells of a single
regime's value tree,  with full identification of the continuation value at one $(t,s)$ and
not at another.

\emph{(c)}\label{rem:local_compress} At each cell
$(t,s)$ the algorithm solves a single linear programme over
$\cQ_t(s,d_t(s))$; no global optimisation over transition sequences is
required, and singleton feasible sets introduce no additional local kernel
ambiguity, although the continuation value may still have width because
of uncertainty present at later steps.
When confounded or unsupported cells are numerous, the method is
naturally interpreted as a sensitivity analysis rather than expected to
produce narrow bounds.
\end{remark}

\noindent A useful check on Theorem~\ref{thm:dp} is that it must reduce to something
familiar once ambiguity is switched off. Suppose $S_t=\bar H_t=(L_0,A_0,
\ldots,L_t)$ is the full observed history in the classical framework of \citet{HernanRobins2020}, and suppose sequential exchangeability \emph{and} positivity hold at every history-action cell
relevant under the regime $d$: for every $t$ and every history $\bar h_t$
that can be reached under $d$,
\[
\cQ_t(\bar h_t,d_t(\bar h_t))=\bigl\{P(L_{t+1}\mid \bar H_t=\bar h_t,
A_t=d_t(\bar h_t))\bigr\}.
\]
For a fixed regime $d$, positivity is needed only for the action selected
by $d$ at each relevant history. In the Q-learning reduction below
(Corollary~\ref{cor:qlearning}), positivity is instead needed for every
action being compared.

Write $\bar\ell_t=(l_0,\ldots,l_t)$ and
$\bar a_t=(a_0,\ldots,a_t)$, where
$a_k=d_k(\bar h_k)$ for $k=0,\ldots,t$.
\begin{corollary}[The g-formula as a special case]\label{cor:gformula}
Suppose sequential exchangeability \emph{and} positivity hold at every history-action cell
relevant under the regime $d$. Under the singleton condition above, together with the default root set
$\cQ_{-1}(*,\star)=\{P(L_0\in\cdot)\}$,
\begin{equation}\label{eq:gformula_val}
\bar\theta^d=\underline\theta^d=V^d
=\sum_{\bar\ell_{T}}y(\bar l_T)\,P(l_0)
\prod_{t=0}^{T-1}P(l_{t+1}\mid\bar\ell_t,\bar a_t).
\end{equation}
\end{corollary}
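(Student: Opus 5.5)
The approach is to reduce the corollary to Theorem~\ref{thm:dp} and Proposition~\ref{prop:basic}(c), and then unroll the backward recursion into the nested-sum form of the g-formula.

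\textbf{Step 1: collapsing the ambiguity set.} Under the singleton condition and the default root set, every feasible set entering $\cP^d(\cQ)$ at a relevant cell is a singleton. Consistency~\eqref{eq:consistency} forces each of these elements to equal the true kernel. Hence $\kappa_t(\cdot\mid\bar h_t,d_t(\bar h_t))=P(L_{t+1}\mid\bar h_t,d_t(\bar h_t))$ at every reachable history.

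Some cells are not reachable under $d$. At these cells the feasible sets need not be singletons. However, Definition~\ref{def:ambiguity} multiplies every trajectory through such a cell by a prefix probability that is zero. This is because $q_{-1}=P(L_0\in\cdot)$ and the earlier kernels are the fixed observed conditionals. Consequently every $Q\in\cP^d$ assigns the same mass to every trajectory, which is the product in~\eqref{eq:gformula_val}. So $\cP^d$ is a singleton, and it equals $\{P^d\}$ because $P^d\in\cP^d$ by Theorem~\ref{thm:factorization} and consistency. This yields $\bar\theta^d=\underline\theta^d=V^d$, extending Proposition~\ref{prop:basic}(c) to cells relevant under $d$.

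\textbf{Step 2: unrolling the recursion.} I would apply Theorem~\ref{thm:dp} with singleton sets. The recursion~\eqref{eq:upper_rec} then reads
\[
\overline V_t^d(\bar h_t)=\sum_{l_{t+1}}P(l_{t+1}\mid\bar h_t,d_t(\bar h_t))\,\overline V_{t+1}^d(\bar h_{t+1}),
\]
starting from $\overline V_T^d=y$, with $\overline V_{-1}^d(*)=\sum_{l_0}P(l_0)\overline V_0^d(l_0)$. This holds at reachable histories; at unreachable ones the values get zero weight anyway. Substituting recursively gives the iterated sum
\[
\sum_{l_0}P(l_0)\sum_{l_1}P(l_1\mid l_0,a_0)\cdots\sum_{l_T}P(l_T\mid\bar\ell_{T-1},\bar a_{T-1})\,y(\bar l_T),
\]
with $a_k=d_k(\bar h_k)$. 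Since $S_t=\bar H_t$, the successor set $\mathcal S_{t+1}(\bar h_t,a_t)$ consists of histories extending $(\bar h_t,a_t)$. Summing over $s'$ in this set is therefore the same as summing over $l_{t+1}$. Distributing the sums gives~\eqref{eq:gformula_val}.

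\textbf{Main obstacle.} The only delicate point is the handling of unreachable histories, where neither positivity nor the singleton condition holds. I would deal with it by showing inductively that a history with zero prefix probability contributes zero to every $Q\in\cP^d$. I would also use the null-event convention of Proposition~\ref{prop:obs_factorization}, so that the right side of~\eqref{eq:gformula_val} is well defined and ignores these histories.
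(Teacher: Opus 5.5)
Your proposal is correct and follows essentially the paper's route: singleton feasible sets make the $\max$ and $\min$ in Theorem~\ref{thm:dp} trivial, the common recursion unrolls into the g-formula sum, and the identification with $V^d$ comes from Proposition~\ref{prop:basic}. Your explicit zero-prefix-mass argument for histories not reachable under $d$ is a genuine tightening. The paper invokes Proposition~\ref{prop:basic}(c), which assumes \emph{all} feasible sets are singletons, whereas the corollary imposes this only at relevant cells; alternatively, Proposition~\ref{prop:basic}(b) already yields $V^d$ once $\bar\theta^d=\underline\theta^d$.
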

\noindent Equation~\eqref{eq:gformula_val} is the nonparametric g-formula
of \citet{Robins1986} in the history-based formulation of
\citet{HernanRobins2020}.
\begin{proof}
Singleton sets make the $\max$ and $\min$ in Theorem~\ref{thm:dp} trivial,
collapsing each recursion step to
$\overline V_t^d(\bar h_t)=\underline V_t^d(\bar h_t)
=\sum_{l'}P(l'\mid\bar h_t,d_t(\bar h_t))\,\overline V_{t+1}^d(\bar h_t,d_t(\bar h_t),l')$.
Under sequential exchangeability and positivity, for $t\geq0$, the structural kernel
 $\kappa_t(\cdot\mid\bar h_t,a_t)
=P(L_{t+1}\in\cdot\mid\bar H_t=\bar h_t,A_t=a_t)$ 
while the convention at $t=-1$ gives $\kappa_{-1}(l_0\mid *,\star)=P(l_0)$.
Consequently, iterating the common recursion of Theorem~\ref{thm:dp}
from $t=T-1$ through $t=-1$ gives
\[
\overline V_{-1}^d(*)
=
\underline V_{-1}^d(*)
=
\sum_{\bar\ell_T}
y(\bar\ell_T)\,P(l_0)
\prod_{t=0}^{T-1}
P(l_{t+1}\mid\bar\ell_t,\bar a_t),
\qquad
\bar a_t=d_t(\bar h_t).
\]
The left-hand side equals
$\bar\theta^d=\underline\theta^d$. Proposition~\ref{prop:basic}(c)
then gives the common value $V^d$, proving
equation~\eqref{eq:gformula_val}.
\end{proof}

\begin{remark}[Financial interpretation]\label{rem:tree_methods}
Theorem~\ref{thm:dp} has a natural reading in the language of incomplete
financial markets. For a fixed regime $d$, the terminal outcome
$Y=y(S_T)$ plays the role of a contingent claim, and $\cP^d(\cQ)$ plays
the role of the set of pricing measures consistent with the local
feasible sets; the global bounds $\underline\theta^d$ and $\bar\theta^d$
are then analogous to the bid and ask prices of the claim. The same
reading applies at every intermediate node: $\underline V_t^d(s)$ and
$\overline V_t^d(s)$ are the bid and ask prices of the remaining claim
conditional on being at $s$, coinciding with the payoff at maturity,
$\underline V_T^d(s)=\overline V_T^d(s)=y(s)$, and propagating backward
through the feasible sets, as in Theorem~\ref{thm:dp}, down to
$\underline V_{-1}^d(*)=\underline\theta^d$ and
$\overline V_{-1}^d(*)=\bar\theta^d$ at the root. Backward propagation of
bid and ask prices through an outcome tree is a standard pricing technique in mathematical finance.
\end{remark}

\subsection{Optimal Regime Selection under Ambiguity}\label{subsec:selection}\label{sec:optimal}

Regime evaluation bounds $V^d$ for a fixed $d$; regime \emph{selection}
requires an additional criterion, since partially identified value
intervals may overlap across regimes and ``$\argmax_d V^d$'' is not
well-defined. A plug-in ranking based only on the observational kernel can
ignore the relevant ambiguity and reverse the preferred regime
(Section~\ref{subsec:biom}); a subjective prior reintroduces the
unmodelled assumption the feasible-set construction was built to avoid;
minimax regret couples a regime's evaluation to every other regime's
value, breaking the rectangular decomposition used below.

\noindent We adopt instead the \emph{maximin} criterion,
$d^*_{\mathrm{mm}}\in\argmax_{d}\underline\theta^d$: a regime with the
best worst-case lower bound. This is a conservative, ambiguity-averse
decision rule in the tradition of Wald and
\citet{GilboaSchmeidler1989}, and, under $(s,a)$-rectangularity
(Proposition~\ref{prop:rect}), it is computable by the same robust
Bellman recursion studied in the \emph{robust Markov decision process}
literature \citep{Iyengar2005,NilimElGhaoui2005}, rather than requiring
bespoke machinery.

\begin{definition}[Maximin optimal DTR]\label{def:maximin}
A regime $d^*_{\mathrm{mm}}\in\cD$ is \emph{maximin optimal} if
\[d^*_{\mathrm{mm}}\in\argmax_{d\in\cD}\,\underline\theta^d
=\argmax_{d\in\cD}\,\inf_{Q\in\cP^d}\E^Q[Y].\]
\end{definition}

Rectangularity again converts the trajectory-level maximin problem into a
backward recursion, now with a $\max$ over actions interleaved with the
worst-case $\min$ over kernels at every step.  The root-state and
dummy-action conventions introduced in Subsection~\ref{subsec:default}
allow the same recursion to run uniformly from $t=T-1$ down to $t=-1$.
At $t=-1$, the action set $\mathcal A_{-1}=\{\star\}$ is a singleton, so
the action maximisation introduces no additional choice.


\begin{theorem}[Maximin backward induction]\label{thm:optimal}
Fix a family
\[
\cQ=
\bigl\{
\cQ_t(s,a):
t=-1,\ldots,T-1,\ 
s\in\mathcal S_t,\ 
a\in\mathcal A_t
\bigr\}
\]
of local feasible sets. Define $W_T(s)=y(s)$ and, for
$t=T-1,\ldots,-1$,
\begin{equation}\label{eq:minimax_rec}
W_t(s)
=
\max_{a\in\mathcal A_t}
\min_{q\in\cQ_t(s,a)}
\sum_{s'\in\mathcal S_{t+1}(s,a)}
q(s')\,W_{t+1}(s').
\end{equation}
Then, for $t\geq0$,
\[
W_t(s)
=
\max_{d_{t:T-1}}
\underline V_t^d(s;\cQ),
\qquad
d_{t:T-1}=(d_t,\ldots,d_{T-1}),
\]
and any selectors
\begin{equation}\label{eq:optimal_action}
d_t^*(s)
\in
\argmax_{a\in\cA}
\min_{q\in\cQ_t(s,a)}
\sum_{s'\in\mathcal S_{t+1}(s,a)}
q(s')\,W_{t+1}(s'),
\qquad t=0,\ldots,T-1,
\end{equation}
form a maximin-optimal regime. In particular,
$\displaystyle{
W_{-1}(*)
=
\max_{d\in\cD}\underline\theta^d
=
\underline\theta^{d^*}}.
$\\
Under the default root set
$\cQ_{-1}(*,\star)=\{P(S_0\in\cdot)\}$,
$\displaystyle{
W_{-1}(*)
=
\sum_{s_0\in\mathcal S_0}P(s_0)W_0(s_0)}.
$
\end{theorem}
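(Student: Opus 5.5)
We prove the claim by downward induction on $t$, following the argument of Theorem~\ref{thm:dp}. The inductive claim at $t$ is that for every $s\in\mathcal S_t$, $W_t(s)=\max_{d_{t:T-1}}\underline V_t^d(s;\cQ)$, and that the selectors $d^*_{t:T-1}$ from \eqref{eq:optimal_action} attain this maximum simultaneously at every $s$.

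The base case $t=T$ is trivial: $\underline V_T^d(s)=y(s)=W_T(s)$ for every $d$.

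For the inductive step, fix $s$ and an arbitrary tail $d_{t:T-1}$, and set $a=d_t(s)$.

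\textbf{Upper bound.} By the lower recursion \eqref{eq:lower_rec},
\[
\underline V_t^d(s)=\min_{q\in\cQ_t(s,a)}\sum_{s'} q(s')\,\underline V_{t+1}^d(s').
\]
The induction hypothesis gives $\underline V_{t+1}^d(s')\le W_{t+1}(s')$ pointwise. The map $v\mapsto\min_q\sum_{s'} q(s')v(s')$ is monotone, being a minimum of linear functionals with nonnegative weights. Hence
\[
\underline V_t^d(s)\le\min_{q\in\cQ_t(s,a)}\sum_{s'} q(s')\,W_{t+1}(s')\le W_t(s).
\]

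\textbf{Attainment.} Take $d^*$ with $d^*_t(s)$ chosen as in \eqref{eq:optimal_action} and with tail $d^*_{t+1:T-1}$. By the induction hypothesis $\underline V_{t+1}^{d^*}=W_{t+1}$ at every $s'$, so \eqref{eq:lower_rec} yields $\underline V_t^{d^*}(s)=W_t(s)$.

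Two details make this step work. First, each inner minimum is attained, because $\cQ_t(s,a)$ is compact and the objective is linear; the outer maximum is over the finite set $\mathcal A$. Second, $\underline V_{t+1}^d$ depends only on $d_{t+1:T-1}$ and not on $d_t$. This is what allows the action at $(t,s)$ to be chosen independently at each $s$ while reusing one common optimal tail. It holds because in Definition~\ref{def:vf} the continuation set from $(t+1,s')$ involves only $d_k$ for $k\ge t+1$. This is the rectangular decoupling of Proposition~\ref{prop:rect}, and it lets a single regime $d^*$ be optimal from every state at once.

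At $t=-1$ the action set $\mathcal A_{-1}=\{\star\}$ is a singleton. The same argument then gives
\[
W_{-1}(*)=\max_{d\in\cD}\underline V_{-1}^d(*)=\max_{d\in\cD}\underline\theta^d,
\]
using the identity $\underline V_{-1}^d(*)=\underline\theta^d$ established before Theorem~\ref{thm:dp}. Since $d^*$ attains this maximum, $d^*$ is maximin optimal in the sense of Definition~\ref{def:maximin}.

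Under the default root set $\cQ_{-1}(*,\star)=\{P(S_0\in\cdot)\}$ the inner minimum at $t=-1$ is over a singleton, so $W_{-1}(*)=\sum_{s_0}P(s_0)W_0(s_0)$.

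The main obstacle is exchanging $\max_d$ with the inner $\min_q$. We handle it with the monotonicity bound in the upper-bound step together with the construction of a single $d^*$ that is optimal pointwise in $s$. No minimax theorem is needed, because the action choice comes before nature's choice of kernel at each cell, and that kernel choice is made separately for each cell.
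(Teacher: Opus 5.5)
Your proof is correct, but it is organized differently from the paper's. The paper does not carry out the induction for $t\ge 0$ itself. It notes that \eqref{eq:minimax_rec} restricted to $t\geq 0$ is the finite-horizon robust Bellman recursion of \citet{Iyengar2005}, with finite spaces, compact convex sets chosen independently across cells, and a terminal payoff only. From that literature it imports both the recursion and the optimality of the greedy policy, $W_t(s)=\underline V_t^{d^*}(s)$. The root step $t=-1$ is then handled separately by applying Theorem~\ref{thm:dp} to $d^*$.

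You instead derive everything from the paper's own lower recursion \eqref{eq:lower_rec}. Monotonicity of $v\mapsto\min_{q}\sum_{s'}q(s')v(s')$ gives $\underline V_t^d(s)\le W_t(s)$ for every tail $d_{t:T-1}$. The greedy selectors prepended to a common optimal tail give equality, and the root is treated by the same step because $\mathcal A_{-1}$ is a singleton.

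Your route is self-contained. It avoids checking that the paper's setting (time-varying $\mathcal S_t$, cell-dependent successor sets $\mathcal S_{t+1}(s,a)$, value functions defined through continuation ambiguity sets) fits Iyengar's formalism. It also proves exactly the optimality claimed, over the class $\cD$ of deterministic regimes $d_t:\mathcal S_t\to\cA$.

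Your argument also makes explicit a root-level step the paper leaves implicit:
$\underline\theta^d=\min_{q\in\cQ_{-1}(*,\star)}\sum_{s_0}q(s_0)\underline V_0^d(s_0)\le\min_{q\in\cQ_{-1}(*,\star)}\sum_{s_0}q(s_0)W_0(s_0)=\underline\theta^{d^*}$.
This step is genuinely needed to conclude $\underline\theta^{d^*}=\max_{d}\underline\theta^d$ when the root set is not a singleton.

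The citation, for its part, buys brevity and links the result to the robust-MDP literature. There the greedy deterministic Markov policy is shown to be optimal even over richer (history-dependent, randomized) policy classes, which is more than this theorem requires.
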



\begin{proof}
Equation~\eqref{eq:minimax_rec}, restricted to $t\geq0$, is the
finite-horizon robust Bellman recursion of \citet{Iyengar2005}: finite
state and action spaces, compact convex feasible sets $\cQ_t(s,a)$ whose
choices at distinct state-action cells are independent by rectangularity
(Proposition~\ref{prop:rect}), and no running reward beyond the terminal
payoff $y$ --- so both the recursion and the optimality of the greedy
Markov policy $d^*$ follow directly: $W_t(s)=\underline V_t^{d^*}(s)$ for
every $t,s$, where $\underline V_t^d(s;\cQ)$ depends on $d$ only through
$d_{t:T-1}$ (Definition~\ref{def:vf}). The same recursion at $t=-1$ has
only the singleton dummy action $\star$, so
$W_{-1}(*)=\min_{q\in\cQ_{-1}(*,\star)}\sum_{s_0}q(s_0)\,W_0(s_0)$; and
Theorem~\ref{thm:dp} applied to $d^*$ gives
$\underline\theta^{d^*}=\min_{q\in\cQ_{-1}(*,\star)}\sum_{s_0}q(s_0)\,\underline
V_0^{d^*}(s_0)=\min_{q\in\cQ_{-1}(*,\star)}\sum_{s_0}q(s_0)\,W_0(s_0)=W_{-1}(*)$,
with $\underline\theta^{d^*}=\max_{d\in\cD}\underline\theta^d$ by the
maximin optimality of $d^*$ just established. The singleton-root formula
is immediate from $\cQ_{-1}(*,\star)=\{P(S_0\in\cdot)\}$.
\end{proof}

\begin{remark}
As said before,  equation~\eqref{eq:minimax_rec} is exactly the Bellman equation of a
robust MDP \citep{Iyengar2005,NilimElGhaoui2005}. The present framework
contributes a new \emph{source} for its inputs (the uncertainty sets
$\{\cQ_t(s,a)\}$ are derived from the observational law and causal graph
of Sections~\ref{sec:model}--\ref{sec:feasible} rather than posited
exogenously) not a new algorithm: the robust-control machinery is
reused unchanged.
\end{remark}

\noindent The reduction to a familiar classical procedure again serves as a sanity
check. If sequential exchangeability \emph{and positivity} hold everywhere
--- $P(H_t=h_t,A_t=a)>0$ for every history-action pair under
consideration, so that $\cQ_t(s,a)$ is the observational singleton
$\{P(S_{t+1}\mid s,a)\}$ at \emph{every} $(t,s,a)$ the recursion
evaluates, not merely those with observed support --- the worst-case
$\min$ in \eqref{eq:minimax_rec} has nothing left to range over at any
action, and the maximin recursion collapses to the ordinary backward induction structure underlying Q-learning:

\begin{corollary}[Reduction to Q-learning]\label{cor:qlearning}
Suppose sequential exchangeability and positivity hold for every
state--action cell evaluated by the maximisation: for every relevant $t$,
$s$, and every \emph{candidate} action $a\in\cA$,
$P(S_t=s,A_t=a)>0$, so that $\cQ_t(s,a)=\{P(S_{t+1}\mid s,a)\}$ at every
such cell. Then \eqref{eq:minimax_rec} reduces to
\begin{equation}\label{eq:qlearning}
Q_t(s,a)=\sum_{s'\in\mathcal{S}_{t+1}(s,a)}P(S_{t+1}=s'\mid s,a)\,W_{t+1}(s'),
\qquad
W_t(s)=\max_{a\in\cA}Q_t(s,a),
\end{equation}
and the greedy policy $d^*_t(s)\in\argmax_a Q_t(s,a)$
is the standard Q-learning optimal policy. In particular, with
$S_t=\bar H_t=(L_0,A_0,\ldots,L_t)$, this $Q_t$ is exactly the
action-value function of \citet{Murphy2003} in their original
history-based formulation, satisfying the recursion
\[
Q_t(h_t,a_t)=\E\!\left[\max_{a_{t+1}}Q_{t+1}(H_{t+1},a_{t+1})\;\Big|\;
H_t=h_t,\,A_t=a_t\right],
\]
obtained by substituting $W_{t+1}(\cdot)=\max_{a_{t+1}}Q_{t+1}(\cdot,a_{t+1})$
into \eqref{eq:qlearning}. Murphy's Q-function is thus $Q_t$, not $W_t$; the
latter is the corresponding optimal \emph{value} function,
$W_t(h_t)=\max_{a_t}Q_t(h_t,a_t)$.
\end{corollary}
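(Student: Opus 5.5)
The corollary is a direct specialization of Theorem~\ref{thm:optimal}, so I would plug singleton feasible sets into the maximin recursion \eqref{eq:minimax_rec} and read off the result. By hypothesis, every cell $(t,s,a)$ that the maximisation evaluates is a positive-probability cell at which $A_t\leftrightarrow S_{t+1}$ is effectively absent. This is the content of sequential exchangeability in the present model: by Remark~\ref{rem:classical_framework}, $U_t$ is inert. Step~3 of the default construction therefore applies, and $\cQ_t(s,a)=\{P(S_{t+1}\mid s,a)\}$. Remark~\ref{rem:observed_support_compatibility} guarantees that this distribution is supported on $\mathcal S_{t+1}(s,a)$, so it is a legitimate element of $\Delta(\mathcal S_{t+1}(s,a))$. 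The inner $\min$ in \eqref{eq:minimax_rec} then ranges over a single point. It equals $\sum_{s'}P(S_{t+1}=s'\mid s,a)W_{t+1}(s')$, which is the quantity $Q_t(s,a)$ in \eqref{eq:qlearning}, and the outer $\max$ gives $W_t(s)=\max_a Q_t(s,a)$. The selector \eqref{eq:optimal_action} becomes $d^*_t(s)\in\argmax_a Q_t(s,a)$, and Theorem~\ref{thm:optimal} already certifies that it is maximin optimal. Because $\cP^d$ is a singleton for every regime that these cells support, Proposition~\ref{prop:basic}(c) gives $\underline\theta^d=V^d$, so maximin optimality coincides with ordinary optimality of $V^d$. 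This justifies calling $d^*$ the standard Q-learning optimal policy.

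For the history-based statement, I would take $S_t=H_t$ and rewrite the finite sum $\sum_{s'}P(S_{t+1}=s'\mid s,a)\,g(s')$ as the conditional expectation $\E[g(H_{t+1})\mid H_t=h_t,A_t=a_t]$. This is well defined because $P(H_t=h_t,A_t=a_t)>0$. Substituting $g=W_{t+1}=\max_{a_{t+1}}Q_{t+1}(\cdot,a_{t+1})$ yields Murphy's recursion. At $t=T-1$, $W_T=y$ gives the terminal Q-function $\E[Y\mid H_{T-1},A_{T-1}]$. The closing identification $W_t=\max_a Q_t$ is just the second equation of \eqref{eq:qlearning}.

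The main obstacle is the bookkeeping about which cells are relevant. The positivity hypothesis must cover every $(t,s,a)$ actually entered by the recursion, namely all candidate actions at all states $s$ for which $W_t(s)$ feeds into an earlier value. Otherwise Step~2 would produce a full simplex, and the $\min$ would not collapse. I would make this explicit by restricting the recursion to states reachable with positive probability. Values $W_{t+1}(s')$ at unreachable $s'$ are multiplied by $P(s'\mid s,a)=0$, so they never affect the result. This shows that the hypothesis as stated suffices.
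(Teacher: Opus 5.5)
Your proposal is correct and takes the same route as the paper. You plug the singleton sets into \eqref{eq:minimax_rec} so the inner $\min$ collapses and yields \eqref{eq:qlearning}, then with $S_t=\bar H_t$ you read the sum as a conditional expectation to recover Murphy's recursion; your reachability bookkeeping, and your use of Proposition~\ref{prop:basic}(c) to equate maximin with ordinary optimality, make explicit what the paper leaves implicit. One small point: the detour through Step~3 is unnecessary, because the singleton form of $\cQ_t(s,a)$ is already part of the corollary's hypothesis, and it is slightly off, because Remark~\ref{rem:classical_framework} gives only ``inert $U_t \Rightarrow$ sequential exchangeability,'' not the converse.
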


\begin{proof}
With $\cQ_t(s,a)$ a singleton for every $a\in\cA$, not merely the
observed ones, the $\min_q$ in \eqref{eq:minimax_rec} is trivial at every
term of the $\max_a$, giving the first identity in \eqref{eq:qlearning};
the second, $W_t(s)=\max_a Q_t(s,a)$, is then immediate from
\eqref{eq:minimax_rec}. Setting $S_t=\bar H_t$ and substituting
$W_{t+1}=\max_{a_{t+1}}Q_{t+1}(\cdot,a_{t+1})$ into the definition of
$Q_t$ identifies $Q_t$ with Murphy's Q-function by the tower property.
\end{proof}


\section{Numerical Examples}\label{sec:numerical}

\subsection{HIV Antiretroviral Treatment: A Classical Example
            Extended}\label{subsec:hiv}

\citet{Naimi2017} provide a canonical worked illustration of the g-formula
for a two-period HIV treatment problem, assuming sequential exchangeability
at both transitions. We use the same numerical data, but relax the sequential
exchangeability assumption at the first transition, so the first transition kernel is 
not identified; the second transition retains its identified
kernel.  We make no claim that this hybrid assumption is more or less appropriate for
this hypothetical HIV example; the purpose of the example is illustrative.

\noindent A hypothetical cohort of $N=1{,}000{,}000$ HIV-positive patients is followed over two
treatment periods, as discussed below. At baseline all patients
have elevated viral load ($Z_0=1$). The temporal variables are:
$A_0\in\{0,1\}$ (whether treatment was provided at visit~0);
$Z_1\in\{0,1\}$ (viral load at visit~1; $0=$ low $\leq\!200$ copies/ml);
$A_1\in\{0,1\}$ (whether treatment was provided at visit~1);
$C\in\R$ (CD4 lymphocyte count, cells/mm$^3$), the outcome-bearing
variable.

\noindent Set $S_0=Z_0$. The full histories at times~1 and~2 are
$\bar H_1=(S_0,A_0,Z_1)$ and $\bar H_2=(S_0,A_0,Z_1,A_1,C)$.  Since
$S_0\equiv1$ almost surely (every patient begins with elevated viral
load), it carries no information,  so we omit it from both variables,  and consider instead
the trimmed variables  $S_1=(A_0,Z_1)$ and $S_2=(A_0,Z_1,A_1,C)$.  The outcome is
$Y=y(S_2)$, with $y$ the map extracting the last coordinate,
$y(a_0,z_1,a_1,c)=c$, therefore $Y=C$. Table~\ref{tab:hiv_obs} reproduces Table~1 of \citet{Naimi2017}.\footnote{%
\citet{Naimi2017} describe their cohort as $N=1{,}000{,}000$, but the
eight cell counts printed in their Table~1 sum to only $999{,}997$; their
web supplement gives $60{,}657$, not $60{,}654$, for the
$(A_0,Z_1,A_1)=(0,1,0)$ cell, resolving the shortfall. We use this
corrected count, so that our total is $N=1{,}000{,}000$.}

\noindent Let $\mathcal C$ denote the finite
measurement support of the CD4 count $C$. We take the local structural
successor sets to be
\[
\mathcal S_1(1,a_0)
=
\{(a_0,0),(a_0,1)\},
\qquad a_0\in\{0,1\},
\]
and
\[
\mathcal S_2((a_0,z_1),a_1)
=
\{(a_0,z_1,a_1,c):c\in\mathcal C\}.
\]
Thus, at each transition, the local structural successor set consists
precisely of the trimmed histories that validly extend the current
state--action pair.

\noindent The causal graph (Figure~\ref{fig:dag}'s directed skeleton) has directed
edges $S_0\to A_0$, $S_0\to S_1$, $A_0\to S_1$, $S_1\to A_1$, $S_1\to S_2$,
$A_1\to S_2$.  We allow a latent common cause of $A_0$ and the $Z_1$
component of $S_1=(A_0,Z_1)$,  thereby the graph contains the bidirected edge
$A_0\leftrightarrow S_1$ 
representing possible unmeasured confounding of treatment initiation.
No $A_1\leftrightarrow S_2$ edge is assumed, matching the Naimi
sequential-exchangeability assumption at step~1, so all step-1 feasible
sets are singletons.

Although we write $C\in\mathbb R$ for convenience, $C$ is a measured
quantity (a CD4 count from a laboratory test) and therefore has a
natural finite measurement support $\mathcal C$, determined by the
assay's finite resolution and reporting range. Thus the notation
$C\in\mathbb R$ is only an idealization and does not represent a
substantive departure from Assumption~\ref{ass:finite}. Moreover, for
the present calculation the time-1 feasible set is a singleton, so the
recursion requires only the conditional means
\[
\E[C\mid A_0=a_0,Z_1=z_1,A_1=a_1].
\]

\begin{table}[ht]
\centering
\caption{Observational data from \citet{Naimi2017}, with the
  $(A_0,Z_1,A_1)=(0,1,0)$ cell count corrected to $60{,}657$ per the
  published web supplement (see text). $C$ = cell-mean CD4 count
  (cells/mm$^3$); $N=1{,}000{,}000$.}
\label{tab:hiv_obs}
\renewcommand{\arraystretch}{1.15}
\begin{tabular}{cccrr}
\toprule
$A_0$ & $Z_1$ & $A_1$ & $\E[C\mid A_0,Z_1,A_1]$ & $N$ \\
\midrule
0 & 0 & 0 &  87.29 & 209{,}271 \\
0 & 0 & 1 & 112.11 &  93{,}779 \\
0 & 1 & 0 & 119.65 &  60{,}657 \\
0 & 1 & 1 & 144.84 & 136{,}293 \\
1 & 0 & 0 & 105.28 & 134{,}781 \\
1 & 0 & 1 & 130.18 &  60{,}789 \\
1 & 1 & 0 & 137.72 &  93{,}903 \\
1 & 1 & 1 & 162.83 & 210{,}527 \\
\bottomrule
\end{tabular}
\end{table}

\noindent We now carry out in detail Theorem~\ref{thm:dp}'s backward induction for the \emph{always-treat} DTR:
the regime $d=(d_0,d_1)=(1,1)$ that sets $A_0=1$ and $A_1=1$.

\paragraph{Initialisation.} At $t=2$,
$\overline V_2^{(1,1)}(s)=\underline V_2^{(1,1)}(s)=Y(s)$ for every
$s\in\mathcal S_2$. This step is not carried out explicitly,  since we do not have the actual supported $C$ values.  Instead,  the
conditional means $\E[Y\mid A_0{=}a_0,Z_1{=}z_1,A_1{=}a_1]=\E[C\mid A_0{=}a_0,Z_1{=}z_1,A_1{=}a_1]$ are given directly (Table~\ref{tab:hiv_obs}). These conditional means will actually enter at the time-1 step of the backward induction algorithm. 

\paragraph{Time 1 step.} This step is unconfounded,  so $\cQ_1(s,a)$ is a
singleton at every cell,  and the upper and lower value functions
coincide:
\[
\overline V_1^{(1,1)}(1,z_1)=\underline V_1^{(1,1)}(1,z_1)=V_1^{(1,1)}(1,z_1)
=\E[C\mid A_0{=}1,Z_1{=}z_1,A_1{=}1],
\]
read directly off the $A_1{=}1$ rows of Table~\ref{tab:hiv_obs}:
\[
V_1^{(1,1)}(1,0)=130.18,
\qquad
V_1^{(1,1)}(1,1)=162.83.
\]

\paragraph{Time-0 step.} For a fixed first-stage action $a_0$, the local support relevant at the step-0 
transition cell is the two-element set $\mathcal{S}_1(1,a_0)=\{(a_0,0),(a_0,1)\}$.  Applying Section~\ref{subsec:default} at the cell
$(S_0,A_0)=(1,1)$, with local support $\mathcal S_1(1,1)=\{(1,0),(1,1)\}$ we get
\[
\cQ_0^{\mathrm{default}}(1,1)=\bigl\{q\in\Delta(\mathcal S_1(1,1)):
\underline\pi_0(z_1\mid 1,1)\leq q(1,z_1)\leq\bar\pi_0(z_1\mid 1,1),\
z_1\in\{0,1\}\bigr\},
\]
which is the Manski polytope supplied by the default construction of Section~\ref{subsec:default} (Step~4), since the cell $(S_0,A_0)=(1,1)$ has positive probability and carries the bidirected edge $A_0\leftrightarrow S_1$.  Here $q(1,z_1)$ denotes the mass $q$ assigns
to the state $s_1=(1,z_1)$. Since $q(1,0)+q(1,1)=1$, only $q(1,0)$ needs
bounding: the $z_1{=}1$ constraint is redundant given the $z_1{=}0$ one.  Then,  
\eqref{eq:manski_lower}--\eqref{eq:manski_upper} give
\[
\underline\pi_0(0\mid1,1)=P(Z_1{=}0,A_0{=}1),
\qquad
\bar\pi_0(0\mid1,1)=\underline\pi_0(0\mid1,1)+P(A_0\neq1),
\]
both computable directly from the cell counts of Table~\ref{tab:hiv_obs}: 
\[
\underline\pi_0(0\mid1,1)=\frac{134{,}781+60{,}789}{1{,}000{,}000}
=\frac{195{,}570}{1{,}000{,}000}\approx0.196,
\]
and, using $P(A_0 \neq 1)=P(A_0=0)=0.500$,
\[
\bar\pi_0(0\mid1,1)=\frac{195{,}570}{1{,}000{,}000}+0.5
\approx0.696.
\]
The always-treat step-0 bounds are
\[
\overline V_0^{(1,1)}(1)=\max_{q\,\in\,\cQ_0^{\mathrm{default}}(1,1)}\ \sum_{z_1\in\{0,1\}}q(1,z_1)\,V_1^{(1,1)}(1,z_1),
\]
\[
\underline V_0^{(1,1)}(1)=\min_{q\,\in\,\cQ_0^{\mathrm{default}}(1,1)}\ \sum_{z_1\in\{0,1\}}q(1,z_1)\,V_1^{(1,1)}(1,z_1).
\]
Since $\mathcal S_1(1,1)$ has only two points this optimisation is
analytic: the maximiser assigns as much mass as possible to the
higher-value state $z_1{=}1$, using the lower endpoint for $q(1,0)$,
while the minimiser assigns as much mass as possible to the lower-value
state $z_1{=}0$, using the upper endpoint for $q(1,0)$. Using the
unrounded endpoints determined by the displayed counts,
\[
\overline V_0^{(1,1)}(1)=\mathbf{156.445},
\qquad
\underline V_0^{(1,1)}(1)=\mathbf{140.120},
\]
with maximising kernel $(q(1,0),q(1,1))$ approximately $(0.196,0.804)$
and minimising kernel approximately $(0.696,0.304)$.

\paragraph{Root step.} Since $S_0\equiv1$ is fixed, the root step is trivial:
$\cQ_{-1}(*,\star)=\{P(S_0\in\cdot)\}$ is a point mass on $S_0=1$, therefore the $t=-1$ instance of Theorem~\ref{thm:dp} gives
\[
\bar\theta^{(1,1)}=\overline V_0^{(1,1)}(1)=\mathbf{156.445},
\qquad
\underline\theta^{(1,1)}=\underline V_0^{(1,1)}(1)=\mathbf{140.120}.
\]

\noindent Having evaluated always-treat in detail, we now apply the same
calculation to the other three static regimes. Together, the four
regimes considered here are
\[
\mathcal D_{\mathrm{stat}}
=
\{(0,0),(0,1),(1,0),(1,1)\},
\]
corresponding respectively to never-treat, treat-late, treat-early,
and always-treat. These four regimes do not exhaust the deterministic
DTR class: the time-1 action may depend on the observed history
\(S_1=(A_0,Z_1)\), as in a rule that treats at time~1 only when
\(Z_1=1\). We restrict attention here to the four static regimes,
since the purpose of this subsection is to illustrate fixed-regime
evaluation.

\noindent Treat-early reuses the same feasible set
\(\cQ_0^{\mathrm{default}}(1,1)\) as always-treat.  Never-treat and
treat-late instead have \(d_0=0\) and use
\(\cQ_0^{\mathrm{default}}(1,0)\), derived in the same way from the
\(A_0=0\) cells of Table~\ref{tab:hiv_obs}. Combining the appropriate feasible set with the corresponding time-1 step value
functions $V_1^d$,  the same maximisation used above for always-treat gives the
value functions and bounds for all four regimes, reported in
Tables~\ref{tab:hiv_v1} and~\ref{tab:hiv_summary}.\\


\begin{table}[ht]
\centering
\caption{Step-1 value functions, based on the reported cell means.}
\label{tab:hiv_v1}
\renewcommand{\arraystretch}{1.15}
\begin{tabular}{lcc}
\toprule
Regime & $V_1^d(d_0,z_1{=}0)$ & $V_1^d(d_0,z_1{=}1)$ \\
\midrule
Always-treat $(1,1)$ & 130.18 & 162.83 \\
Never-treat  $(0,0)$ &  87.29 & 119.65 \\
Treat-early  $(1,0)$ & 105.28 & 137.72 \\
Treat-late   $(0,1)$ & 112.11 & 144.84 \\
\bottomrule
\end{tabular}
\end{table}

\begin{table}[ht]
\centering
\caption{Causal bounds and plug-in values.
  All quantities in CD4 cells/mm$^3$, computed at full precision and
  rounded to three decimals for display.}
\label{tab:hiv_summary}
\renewcommand{\arraystretch}{1.18}
\begin{tabular}{lccc}
\toprule
Regime & $\underline\theta^d$ & Plug-in & $\bar\theta^d$ \\
\midrule
Always-treat $d=(1,1)$ & 140.120 & 150.059 & 156.445 \\
Never-treat  $(d=0,0)$ &  93.663 & 100.037 & 109.843 \\
Treat-early  $d=(1,0)$ & 115.156 & 125.031 & 131.376 \\
Treat-late   $(0,1)$ & 118.556 & 125.002 & 134.921 \\
\bottomrule
\end{tabular}
\end{table}

\noindent Table~\ref{tab:hiv_summary} also reports, for each regime, the plug-in
(g-formula) value obtained by selecting the observed kernel from the
corresponding Manski feasible set; since the observed kernel is itself a
feasible point, this value lies between $\underline\theta^d$ and
$\bar\theta^d$ in every row (for always-treat, $150.059\in[140.120,156.445]$,
using observed kernel $(0.391,0.609)$).

Our plug-in values agree with \citet{Naimi2017} to the reported precision
for never-treat, treat-early, and treat-late ($100.0$, $125.0$, $125.0$).
For always-treat we obtain $150.059$, which rounds to $150.1$ rather than the reported
$150.0$ with this small difference presumably the result of some
intermediate rounding.

The plug-in ranking favours always-treat ($150.059$), and this remains
true under the specific type of confounding structure that we allow here: 
Table~\ref{tab:hiv_summary} shows always-treat's
entire interval, $[140.120,156.445]$, lies strictly above every other
regime's interval,
\[
\underline{\theta}^{\,(1,1)}
=
140.120
>
134.921
=
\max_{d\in
\mathcal D_{\mathrm{stat}}\setminus\{(1,1)\}}
\overline{\theta}^{\,d}.
\]
Thus, among the four static regimes considered here, always-treat is
preferred both by the classical plug-in ranking and by interval
dominance under the confounding model.

All feasible sets used above are the unmodified default construction of
Section~\ref{subsec:default} for the maintained $P$, $G$, and local
structural successor family $\mathbf S$ specified above. Every cell
entering this calculation has positive observational probability:
$P(S_0=1)=1$, and every one of the eight $(A_0,Z_1,A_1)$ cell counts in
Table~\ref{tab:hiv_obs} is strictly positive. At time~0 the bidirected
edge $A_0\leftrightarrow S_1$ therefore gives the default Manski
feasible sets, while at time~1 the absence of
$A_1\leftrightarrow S_2$ identifies the transition kernel. The root law
is also identified. Consequently, Theorem~\ref{thm:sharp} implies that every regime-value
interval reported in Table~\ref{tab:hiv_summary} is sharp for the
maintained causal model class:
\[
\underline\theta^d
=
\inf_{M\in\cM(P,G,\mathbf S)}E_M^d[Y],
\qquad
\overline\theta^d
=
\sup_{M\in\cM(P,G,\mathbf S)}E_M^d[Y],
\]
with both endpoints attained, for every static regime $d$ considered
in this subsection. Sharpness holds regime by regime: the SCM attaining
the lower or upper endpoint for one regime need not attain the
corresponding endpoint for another.

Subtracting the separately sharp bounds for always-treat and never-treat
gives a valid outer interval for the contrast
$E^{(1,1)}[Y]-E^{(0,0)}[Y]$:
\[
[\underline\theta^{(1,1)}-\bar\theta^{(0,0)},\;
 \bar\theta^{(1,1)}-\underline\theta^{(0,0)}]
=[30.276,\;62.781],
\]
computed at full precision before rounding.  These bounds are not claimed to be
sharp for the contrast itself: its two endpoints come from separate
worst-case optimisations, one per regime,  and nothing here shows they
are jointly attainable under a single admissible SCM.

\subsection{Synthetic Biomarker Study: Reversal between Plug-in and Maximin
            Rankings}\label{subsec:biom}

Subsection~\ref{subsec:hiv} evaluated fixed regimes using a full-history
state and a binary successor component at the confounded transition. We
now study optimal regime selection under the maximin criterion of
Subsection~\ref{subsec:selection}. The main difference with the example in
Subsection~\ref{subsec:hiv} is that we now use
a three-state Markov model in which $S_t$ is a sufficient state summary
rather than the full observed history, while
retaining binary treatment. We also consider a nondegenerate baseline law
for the state $S_0$, in contrast with the fixed value of $S_0$ in Subsection~\ref{subsec:hiv}.

\subsubsection{Setup and Observational Law}\label{subsec:setup}

\paragraph{Variables and graph.}
State $S_t\in\{0,1,2\}$ (low/medium/high biomarker expression), treatment
$A_t\in\{0,1\}$ (standard/intensive), horizon $T=2$, outcome $Y=y(S_2)$
with $y(s)=1-s/2$. Thus higher terminal biomarker expression is adverse,
and maximizing $Y$ is equivalent to minimizing terminal biomarker
severity. Directed edges: $S_0\to A_0$, $S_0\to S_1$, $A_0\to S_1$,
$S_1\to A_1$, $S_1\to S_2$, $A_1\to S_2$. Bidirected edge:
$A_0\leftrightarrow S_1$ (contemporaneous confounding at step~0); no
bidirected edge $A_1\leftrightarrow S_2$.

\noindent We impose no structural restriction on the next state beyond the
three-state space itself. Thus, for every $s\in\{0,1,2\}$ and
$a\in\{0,1\}$, we take
\[
\mathcal S_1(s,a)=\mathcal S_2(s,a)=\{0,1,2\}.
\]
These sets constitute the local structural successor family
$\mathbf S$ for this example.

\paragraph{Observational law.}
The baseline state follows
\[
P(S_0=0)=0.2,\qquad P(S_0=1)=0.5,\qquad P(S_0=2)=0.3,
\]
so the root distribution is not degenerate, unlike Subsection~\ref{subsec:hiv}'s $S_0\equiv1$. Treatment propensities vary with observed baseline severity, so the fraction of patients observed under each action, and hence the default feasible set's coordinate width, varies across baseline states,
\[
\begin{aligned}
P(A_0=1\mid S_0=0)&=0.40,\\
P(A_0=1\mid S_0=1)&=0.70,\\
P(A_0=1\mid S_0=2)&=0.90,
\end{aligned}
\]
with $P(A_0=0\mid S_0=s_0)=1-P(A_0=1\mid S_0=s_0)$ in each case. 

\medskip
\noindent\textit{Step~0 transitions}: 

\begin{center}
\begin{tabular}{ccccc}
\toprule
$s_0$ & $a$ & $P(S_1=0\mid S_0{=}s_0,A_0{=}a)$ & $P(S_1=1\mid\cdot)$ & $P(S_1=2\mid\cdot)$ \\
\midrule
0 & 0 (standard)  & 0.55 & 0.30 & 0.15 \\
0 & 1 (intensive) & 0.30 & 0.45 & 0.25 \\
1 & 0 (standard)  & 0.75 & 0.20 & 0.05 \\
1 & 1 (intensive) & 0.50 & 0.40 & 0.10 \\
2 & 0 (standard)  & 0.05 & 0.15 & 0.80 \\
2 & 1 (intensive) & 0.15 & 0.35 & 0.50 \\
\bottomrule
\end{tabular}
\end{center}

\medskip
\noindent\textit{Step~1 transitions}:

\begin{center}
\begin{tabular}{cccccc}
\toprule
$s$ & $a$ & $P(S_2=0\mid s,a)$ & $P(S_2=1\mid\cdot)$ & $P(S_2=2\mid\cdot)$ & $\E[Y\mid s,a]$ \\
\midrule
0 & 0 & 0.60 & 0.30 & 0.10 & 0.750 \\
0 & 1 & 0.25 & 0.50 & 0.25 & 0.500 \\
1 & 0 & 0.30 & 0.40 & 0.30 & 0.500 \\
1 & 1 & 0.10 & 0.30 & 0.60 & 0.250 \\
2 & 0 & 0.05 & 0.15 & 0.80 & 0.125 \\
2 & 1 & 0.20 & 0.40 & 0.40 & 0.400 \\
\bottomrule
\end{tabular}
\end{center}

\noindent Unlike step~0, step~1 is unconfounded. We also assume
positivity, $P(A_1{=}a\mid S_1{=}s)>0$, for every $s\in\{0,1,2\}$ and
$a\in\{0,1\}$. Hence the post-intervention kernel coincides with the
observed conditional $P(S_2\mid S_1,A_1)$ (Section~\ref{sec:model}). The
numerical values of the step-1 treatment propensities do not enter the
calculations and are therefore not reported.

\subsubsection{Q-Learning Benchmark under Full Unconfoundedness}\label{subsec:qbench}

To ask what Q-learning would recommend under the counterfactual hypothesis
that step~0, like step~1, satisfied sequential exchangeability, we run the
ordinary backward recursion \eqref{eq:qlearning} of Corollary~\ref{cor:qlearning}
state by state, treating every observed conditional of
Section~\ref{subsec:setup} as if it were the interventional kernel. Under
this benchmark all relevant feasible sets are singletons.

\smallskip
\noindent \textbf{Initialisation} ($t=2$): $W_2(s)=y(s)$, i.e., $W_2(0)=1$, $W_2(1)=0.5$, $W_2(2)=0$.

\smallskip
\noindent\textbf{Recursion at $t=1$}: step~1 is genuinely unconfounded, so
$Q_1(s,a)$ is simply the observed conditional mean,
$Q_1(s,0)=(0.750,0.500,0.125)_{s=0,1,2}$ and
$Q_1(s,1)=(0.500,0.250,0.400)_{s=0,1,2}$. Maximising over $a$ at each $s$
gives
\[
d^*_1(s)=\begin{cases}0\quad(\text{standard})&s\in\{0,1\},\\1\quad(\text{intensive})&s=2,\end{cases}
\qquad
W_1(0)=0.750,\ \ W_1(1)=0.500,\ \ W_1(2)=0.400.
\]

\smallskip
\noindent\textbf{Recursion at $t=0$} (assuming sequential exchangeability): substituting each baseline
state's observed transition for the unidentified causal kernel, as in
the plug-in calculations of Subsection~\ref{subsec:hiv},
\eqref{eq:qlearning} gives the resulting action values
$Q_0^{\mathrm{plug}}(s_0,a)$:
\begin{align*}
Q_0^{\mathrm{plug}}(0,0)&=0.55(0.750)+0.30(0.500)+0.15(0.400)=0.6225, \\
Q_0^{\mathrm{plug}}(0,1)&=0.30(0.750)+0.45(0.500)+0.25(0.400)=0.55, \\
Q_0^{\mathrm{plug}}(1,0)&=0.75(0.750)+0.20(0.500)+0.05(0.400)=0.6825,\ \ \\
Q_0^{\mathrm{plug}}(1,1)&=0.50(0.750)+0.40(0.500)+0.10(0.400)=0.615, \\
Q_0^{\mathrm{plug}}(2,0)&=0.05(0.750)+0.15(0.500)+0.80(0.400)=0.4325,\ \ \\
Q_0^{\mathrm{plug}}(2,1)&=0.15(0.750)+0.35(0.500)+0.50(0.400)=0.4875,
\end{align*}
so the Q-learning-optimal first-stage greedy rule is itself adaptive,
\[
d_0^{\mathrm{plug}}(s_0)=\begin{cases}0&s_0\in\{0,1\},\\1&s_0=2,\end{cases}
\]
\[
W_0^{\mathrm{plug}}(0)=0.6225,\ \ W_0^{\mathrm{plug}}(1)=0.6825,\ \ W_0^{\mathrm{plug}}(2)=0.4875.
\]

\smallskip
\noindent\textbf{Recursion at $t=-1$}
Writing
$d^{\mathrm{plug}}:=(d_0^{\mathrm{plug}},d_1^*)$, the $t=-1$ instance
of Theorem~\ref{thm:dp} averages the state-specific optimal values over
$P(S_0)$, since
$ \cQ_{-1}^{\mathrm{default}}(*,\star) =\{P(S_0\in\cdot)\}$.
For a regime $d=(d_0,d_1^*)$, write
\[
V^{d}_{\mathrm{plug}}:=\sum_{s_0}P(s_0)\,Q_0^{\mathrm{plug}}\big(s_0,d_0(s_0)\big)
\]
for its root-averaged plug-in evaluation, obtained by substituting the
observed step-0 kernel for the true, here unidentified, one; by
Proposition~\ref{prop:basic}(c) it coincides with the true $V^d$
whenever all feasible sets are singletons, but need not do so under the
confounding maintained here. For $d^{\mathrm{plug}}$,
\[
\begin{aligned}
V^{d^{\mathrm{plug}}}_{\mathrm{plug}}
&=\sum_{s_0}P(s_0)\,W_0^{\mathrm{plug}}(s_0)\\
&=0.2(0.6225)+0.5(0.6825)+0.3(0.4875)=0.612.
\end{aligned}
\]

\subsubsection{Maximin Backward Induction under Step-0 Ambiguity}\label{subsec:maximinLP}

We now apply the general maximin recursion \eqref{eq:minimax_rec} of
Theorem~\ref{thm:optimal}, of which Section~\ref{subsec:qbench}'s
calculation was the singleton special case. Step~0 is actually
confounded, so here the minimisation in \eqref{eq:minimax_rec} is over a
genuine feasible set rather than a singleton.

\smallskip
\noindent\textbf{Initialisation} ($t=2$) and \textbf{recursion at $t=1$}
coincide exactly with Section~\ref{subsec:qbench}: step~1 is unconfounded
regardless of the step-0 ambiguity, so $\cQ_1(s,a)=\{P(S_2\mid
S_1{=}s,A_1{=}a)\}$ is a singleton at every cell, the minimisation in
\eqref{eq:minimax_rec} has nothing to minimise over, and the same $d_1^*$
and $W_1$ obtained above apply unchanged.

\smallskip
\noindent\textbf{Recursion at $t=0$}: Since we assume that this step is confounded
here $\cQ_0(s_0,a)$ is \emph{not} a
singleton, and this is the first point in the example where a genuine
feasible set is required. The default construction of
Section~\ref{subsec:default} assigns each confounded cell the Manski
polytope with lower and upper endpoints
\[
\underline\pi_0(s'\mid s_0,a)=P(A_0{=}a\mid S_0{=}s_0)\,P(S_1{=}s'\mid S_0{=}s_0,A_0{=}a),
\]
\[
\overline\pi_0(s'\mid s_0,a)=\underline\pi_0(s'\mid s_0,a)+P(A_0\neq a\mid S_0{=}s_0).
\]
This construction is applied at each of the six $(s_0,a)$ cells using the
propensities and transition rows of Section~\ref{subsec:setup}'s table.
Table~\ref{tab:manski_all} collects
the resulting six polytopes.

\begin{table}[h]
\centering
\begin{tabular}{cccccc}
\toprule
$s_0$ & $a$ & $[\underline\pi,\overline\pi]$ at $s'=0$ & at $s'=1$ & at $s'=2$ & \shortstack{coordinate width\\$P(A_0\neq a\mid s_0)$}\\
\midrule
0 & 0 & $[0.330,\,0.730]$ & $[0.180,\,0.580]$ & $[0.090,\,0.490]$ & 0.40 \\
0 & 1 & $[0.120,\,0.720]$ & $[0.180,\,0.780]$ & $[0.100,\,0.700]$ & 0.60 \\
1 & 0 & $[0.225,\,0.925]$ & $[0.060,\,0.760]$ & $[0.015,\,0.715]$ & 0.70 \\
1 & 1 & $[0.350,\,0.650]$ & $[0.280,\,0.580]$ & $[0.070,\,0.370]$ & 0.30 \\
2 & 0 & $[0.005,\,0.905]$ & $[0.015,\,0.915]$ & $[0.080,\,0.980]$ & 0.90 \\
2 & 1 & $[0.135,\,0.235]$ & $[0.315,\,0.415]$ & $[0.450,\,0.550]$ & 0.10 \\
\bottomrule
\end{tabular}
\caption{Manski feasible sets $\cQ_0^{\mathrm{default}}(s_0,a)$ at every
baseline state and action, obtained from
$\underline\pi_0(s'\mid s_0,a)=P(A_0{=}a\mid s_0)\,P(S_1{=}s'\mid s_0,a)$,
with common coordinate width $P(A_0\neq a\mid s_0)$.}
\label{tab:manski_all}
\end{table}

\noindent The maximin backward step \eqref{eq:minimax_rec} evaluates, at every
baseline state $s_0$ and action $a$,
\[
L(s_0,a)=\min_{q\in\cQ_0^{\mathrm{default}}(s_0,a)}\big\{0.750\,q_0+0.500\,q_1+0.400\,q_2\big\},
\]
using the continuation values $W_1(0)=0.750$, $W_1(1)=0.500$,
$W_1(2)=0.400$ obtained in the previous step.  For instance,  at
$s_0=1,a=1$ this is the linear programme
\[
\begin{aligned}
&\min_{q_0,q_1,q_2}\ 0.750\,q_0+0.500\,q_1+0.400\,q_2\\
&\text{s.t.}\quad q_0+q_1+q_2=1,\\
&\phantom{\text{s.t.}\quad} 0.350\le q_0\le0.650,\quad 0.280\le q_1\le0.580,\quad 0.070\le q_2\le0.370,
\end{aligned}
\]
a genuine linear program over the two-dimensional polytope cut out by
these box constraints and the simplex, in contrast to the single free
coordinate of the binary example of Section~\ref{subsec:hiv}. Since the
objective coefficients decrease in $s'$, the minimiser shifts as much mass
as the box constraints allow onto the lowest-coefficient state ($s'=2$) and as
little as possible onto the highest ($s'=0$), giving $q^*=(0.350,0.280,0.370)$ and
$L(1,1)=0.5505$. 

\noindent Solving the analogous linear program by the same method at each of the
remaining five state--action cells gives the worst-case values collected
in Table~\ref{tab:maximin_all}, together with the resulting maximin-optimal
action at each baseline state, $d_0^{\mathrm{mm}}(s_0)\in\arg\max_a
L(s_0,a)$, and the corresponding value $W_0^{\mathrm{mm}}(s_0)=\max_a
L(s_0,a)$.

\begin{table}[h]
\centering
\begin{tabular}{ccccc}
\toprule
$s_0$ & $L(s_0,0)$ & $L(s_0,1)$ & $d_0^{\mathrm{mm}}(s_0)$ & $W_0^{\mathrm{mm}}(s_0)$ \\
\midrule
0 & 0.5335   & 0.46     & 0 (standard)  & 0.5335   \\
1 & 0.48475  & 0.5505   & 1 (intensive) & 0.5505   \\
2 & 0.40325  & 0.47875  & 1 (intensive) & 0.47875  \\
\bottomrule
\end{tabular}
\caption{Worst-case action values, the maximin-optimal first-stage
action, and the resulting value $W_0^{\mathrm{mm}}$ at each baseline
state.}
\label{tab:maximin_all}
\end{table}

\noindent Here the maximin-optimal first-stage action is itself adaptive,
like Section~\ref{subsec:qbench}'s plug-in rule: the two benchmarks agree at
$s_0=0$ (both prefer standard) and at $s_0=2$ (both prefer intensive), but
disagree at $s_0=1$, where plug-in prefers standard while maximin instead
prefers intensive.

\smallskip
\noindent\textbf{Root step}: writing
$d^{\mathrm{mm}}:=(d_0^{\mathrm{mm}},d_1^*)$ for the resulting DTR, the
root step performs the aggregation
\[
\begin{aligned}
\underline\theta^{d^{\mathrm{mm}}}
&=\sum_{s_0}P(s_0)\,W_0^{\mathrm{mm}}(s_0)\\
&=0.2(0.5335)+0.5(0.5505)+0.3(0.47875)=0.525575.
\end{aligned}
\]
For comparison, the same root averaging applied to $d^{\mathrm{plug}}$'s
adaptive first-stage rule of Section~\ref{subsec:qbench} gives
\[
\underline\theta^{d^{\mathrm{plug}}}
=\sum_{s_0}P(s_0)\,L\big(s_0,d_0^{\mathrm{plug}}(s_0)\big)
=0.2(0.5335)+0.5(0.48475)+0.3(0.47875)=0.4927,
\]
confirming $\underline\theta^{d^{\mathrm{mm}}}>\underline\theta^{d^{\mathrm{plug}}}$
as required by the maximin optimality of $d^{\mathrm{mm}}$ among all DTRs.

\subsubsection{Reversal and Sharpness}\label{subsec:reversal}

All feasible sets above are the unmodified default construction of
Section~\ref{subsec:default} for the maintained $P$, $G$, and local
structural successor family $\mathbf S$ specified in
Subsection~\ref{subsec:setup}. At step~0,
$A_0\leftrightarrow S_1$ is present, so the positive-probability cells
give the default Manski feasible sets on
$\mathcal S_1(s_0,a)=\{0,1,2\}$. At step~1, the absence of
$A_1\leftrightarrow S_2$ and positivity identify the transition
kernels, while the root law $P(S_0)$ is identified.
Theorem~\ref{thm:sharp} therefore implies that the regime-value bounds
computed from these default feasible sets are sharp for the maintained
causal model class, not merely valid outer bounds.


The regimes $d^{\mathrm{plug}}=(d_0^{\mathrm{plug}},d_1^*)$ and
$d^{\mathrm{mm}}=(d_0^{\mathrm{mm}},d_1^*)$ are both adaptive at each
stage and differ only at $s_0=1$, where $d^{\mathrm{plug}}$ retains
standard treatment and $d^{\mathrm{mm}}$ switches to intensive.
Table~\ref{tab:reversal} reports each regime's root-averaged plug-in
value and worst-case bound $\underline\theta^d$.

\begin{table}[h]
\centering
\begin{tabular}{lccc}
\toprule
Regime & $d_0(s_0)$ & Plug-in value & Worst-case value $\underline\theta^d$ \\
\midrule
$d^{\mathrm{plug}}$ & standard if $s_0\in\{0,1\}$, else intensive & 0.612 & 0.493 \\
$d^{\mathrm{mm}}$   & standard if $s_0{=}0$, else intensive & 0.578 & 0.526 \\
\bottomrule
\end{tabular}
\caption{Reversal between the Q-learning-optimal and maximin-optimal DTRs, aggregated over $P(S_0)$.}
\label{tab:reversal}
\end{table}

$d^{\mathrm{plug}}$ and $d^{\mathrm{mm}}$ are each optimal, but for
different criteria resting on different assumptions. $d^{\mathrm{plug}}$
is Q-learning-optimal under the counterfactual assumption that step~0 is
identified: it substitutes the observed kernel for the true one at every
state and ranks regimes by comparing these single point values.
$d^{\mathrm{mm}}$ makes no such assumption: with step~0 genuinely
confounded, each regime's value is known only up to an interval
$[\underline\theta^d,\bar\theta^d]$, and maximin ranks regimes by the
lower endpoint: the value guaranteed under the worst kernel compatible with the observational law. At $s_0=1$ standard treatment has the higher point
value but the lower worst-case bound, so the two criteria disagree;
since $P(S_0=1)=0.5$, this one state drives the aggregate reversal in
Table~\ref{tab:reversal}. Both DTRs remain globally optimal for their
own criterion, and the reversal is not a contradiction: $d^{\mathrm{plug}}$'s
plug-in value $0.612$ still lies inside its own valid interval
$[0.493,0.654]$, since the observed kernel is itself a member of its
Manski feasible set at every $s_0$.

The reversal traces to an asymmetry in the Manski sets' coordinatewise
slack, which equals the unobserved-action fraction $P(A_0\neq a\mid
S_0=s_0)$. For standard treatment, the observed-action fraction
decreases with baseline severity,
$P(A_0=0\mid S_0=s_0)=(0.60,0.30,0.10)$ at $s_0=0,1,2$, so its
coordinatewise slack correspondingly increases,
$P(A_0\neq0\mid S_0=s_0)=(0.40,0.70,0.90)$. At $s_0=0,2$ the
plug-in-preferred action is also the well-supported one, so no tension
arises. At $s_0=1$, standard is the rarer, more exposed action
(coordinatewise slack $0.70$), and its worst case falls below the
better-supported intensive alternative, flipping the ranking. Maximin
selects the higher lower bound state by state; plug-in ignores this
exposure entirely.

\section{Conclusion}

We have developed a framework for evaluating and selecting dynamic
treatment regimes in the presence of contemporaneous unmeasured
confounding. At each state--action pair, the observational law, the causal
graph, and the structural successor sets determine a set of possible
transition probabilities. These sets are then combined by backward
induction. For the default sets considered in this paper,
Theorem~\ref{thm:sharp} shows that the resulting lower and upper bounds are
exactly the smallest and largest expected outcomes allowed by the assumed
causal model. This sharpness depends on the one-step structure of the
confounding; the backward-induction calculation itself applies to any
rectangular family of feasible sets (Remark~\ref{rem:two_claims}).

The same method can be used with a maximin criterion to select a treatment
regime. When all local transition probabilities are identified, evaluation
reduces to the classical g-formula of \citet{Robins1986}, while selection
reduces to Q-learning of \citet{Murphy2003}. In this sense, the results
extend these classical procedures to the contemporaneous-confounding
setting studied here.

The biomarker example in Section~5.2 shows how the choice of treatment
regime can change once the confounding is taken into account. The plug-in
calculation treats the observed transition probabilities as if they were
identified, while the maximin calculation allows all transition
probabilities that are compatible with the observational law and the causal assumptions.
The two procedures disagree at the baseline state where the treatment
favoured by the plug-in calculation is also the one 
that is less likely to be assigned.
Its worst-case expected outcome is therefore lower, and the
maximin rule selects the alternative treatment. Thus a ranking based on
point identification can differ from the ranking obtained when the
remaining causal uncertainty is taken seriously.

The analysis is restricted to confounding that acts within a single stage.
Assumption~\ref{ass:npc} rules out latent variables that persist across
several periods. Allowing such persistent confounding would require a
different factorization and would likely lead to ambiguity sets that are
not rectangular. We expect that some validity results may survive in that
more general setting, but sharpness should not be expected in general.

Throughout the paper we treat the observational law as known. With finite data, sampling uncertainty would enter in addition to causal identification uncertainty. Developing inference for the resulting partially identified bounds is a separate problem, although the local feasible-set and backward-induction formulation developed here provides a natural starting point for such an extension. Another question is when the regime selected
by the maximin criterion is the same as the regime that would be selected
under point identification. Finally, the financial interpretation in
Remark~\ref{rem:tree_methods} suggests that other ideas from pricing under
uncertainty may also be useful in this setting.

If the analyst specifies where unmeasured confounding may occur and which
next states are structurally possible, the method does not require an
additional sensitivity parameter, an instrument, or a separately chosen
ambiguity set. Under the assumptions of this paper, these ingredients are
enough to obtain sharp bounds and to compare treatment regimes by their
worst-case expected outcomes. The calculations remain local: the global
problem is solved through a sequence of smaller problems, one at each
state--action cell. When there is no confounding, the same construction
reduces to the standard identified methods.

\appendix

\section{Proof of Theorem~\ref{thm:sharp}}\label{app:thm_sharp}

The reverse inclusion in Theorem~\ref{thm:sharp} uses the following
statewise construction.

\begin{lemma}[Local realizability]\label{lem:local_real}
Fix $t$, $s\in\cS_t$, write $a^*=d_t(s)$, and let
$q^s\in\cQ_t^{\mathrm{default}}(s,a^*)$. 
There exist a local treatment mechanism and
a local transition mechanism, with the transition mechanism defined for every
\(a \in \mathcal A\), such that:
\begin{enumerate}[label=(\roman*)]
\item if $P(S_t=s)>0$, they reproduce
$P(A_t,S_{t+1}\mid S_t=s)$;
\item $\kappa_{M,t}(\cdot\mid s,a^*)=q^s$;
\item if $A_t\leftrightarrow S_{t+1}$ is absent from $G$, the treatment
and transition mechanisms share no latent exogenous parent;
\item for every $a\in\cA$, the transition mechanism takes values in
$\cS_{t+1}(s,a)$.
\end{enumerate}
\end{lemma}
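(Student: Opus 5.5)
The construction is explicit and follows the case split of the default construction (Steps~2--4), with a dummy case for the root. Throughout, I write local mechanisms as functions $A_t=f_t^A(s,U_t,\varepsilon_t^A)$ and $S_{t+1}=f_{t+1}^S(s,a,U_t,\varepsilon_{t+1}^S)$ at the fixed state $s$. For every \emph{non-selected} action $a\neq a^*$, I let the transition mechanism at $(s,a)$ be any measurable function of $\varepsilon_{t+1}^S$ alone. Its law is some fixed distribution on $\mathcal S_{t+1}(s,a)$: either $P(S_{t+1}\mid s,a)$ if that cell has positive probability, or an arbitrary point of $\Delta(\mathcal S_{t+1}(s,a))$ otherwise. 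When $P(s,a)>0$ this choice must also be compatible with reproducing the observed joint law, so it is coordinated with the construction below. For $s$ with $P(S_t=s)=0$, condition (i) is vacuous. In that case I take $A_t$ to be a function of $\varepsilon_t^A$ alone and let the $a^*$-transition have law $q^s$ through $\varepsilon_{t+1}^S$. Then (ii)--(iv) hold trivially.

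\textbf{Unconfounded or unsupported case.} Suppose $P(S_t=s)>0$ and either $A_t\leftrightarrow S_{t+1}$ is absent or $P(s,a^*)=0$. I take $U_t$ trivial and $A_t=f^A(s,\varepsilon_t^A)$ with law $P(A_t\mid s)$. For each $a$ with $P(s,a)>0$, the transition $S_{t+1}$ is drawn from $P(S_{t+1}\mid s,a)$ via $\varepsilon_{t+1}^S$; for $a=a^*$ with $P(s,a^*)=0$, it is drawn from $q^s$. Independence of the noises gives the joint law $P(a\mid s)P(s'\mid s,a)$, which is (i). Property (ii) holds either because $q^s$ is the singleton $P(S_{t+1}\mid s,a^*)$ (Step~3) or by construction (Step~2). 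Property (iii) holds because the two mechanisms share no latent parent. Property (iv) follows from Remark~\ref{rem:observed_support_compatibility}.

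\textbf{Confounded case.} Suppose $P(s,a^*)>0$ and the edge is present, so $q^s$ lies in the Manski polytope~\eqref{eq:manski}. The main step is a Manski-type coupling. Set $p=P(A_t=a^*\mid s)$, and define
\[
r(s')=\frac{q^s(s')-P(S_{t+1}=s',A_t=a^*\mid s)}{1-p}
\]
when $p<1$. The lower Manski bound gives $r\ge0$, and summing shows $r$ is a probability on $\mathcal S_{t+1}(s,a^*)$. The upper bound is then automatic. If $p=1$, the Manski set is the singleton and $r$ is irrelevant. Now let $U_t=(A^\circ,Z)$, where $(A^\circ,Z)$ has the observed joint law of $(A_t,S_{t+1})$ given $S_t=s$, together with an independent auxiliary variable. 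Define $A_t=A^\circ$. For every action $a$ with $P(s,a)>0$, let the transition mechanism output $Z$ on $\{A^\circ=a\}$. On $\{A^\circ\neq a\}$, it outputs a draw from $r$ when $a=a^*$, and a draw from $P(S_{t+1}\mid s,a)$ (or any law on $\mathcal S_{t+1}(s,a)$ if $P(s,a)=0$) when $a\neq a^*$. In the observational regime the realized transition is the $A^\circ$-branch, so it equals $Z$; hence $(A_t,S_{t+1})$ has law $P(A_t,S_{t+1}\mid s)$, which is (i). The kernel at $a^*$ is $P(Z=\cdot,A^\circ=a^*\mid s)+(1-p)r=q^s$, which is (ii). Property (iv) holds by Remark~\ref{rem:observed_support_compatibility} together with $\supp r\subseteq\mathcal S_{t+1}(s,a^*)$. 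Property (iii) is not required in this case.

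\textbf{Main obstacle.} The delicate point is bookkeeping rather than analysis. The latent $U_t$ must have a law independent of $S_t$ (Assumption~\ref{ass:npc}), yet here it is built using conditionals given $s$. I would resolve this by letting $U_t$ be a collection $(U_t^{x})_{x\in\mathcal S_t}$ of independent components, one per state. The mechanisms at $s$ then read only $U_t^{s}$. This keeps $U_t$ exogenous and allows the statewise constructions to be assembled independently in the proof of Theorem~\ref{thm:sharp}. I expect verifying that this assembly preserves all four properties simultaneously to be the step needing the most care.
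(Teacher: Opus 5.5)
Your proposal is correct and follows essentially the same route as the paper: the same case split (null state; unsupported or unconfounded cell handled with private treatment and transition noises; confounded cell handled through the residual law $r=(q^s-\underline\pi_t(\cdot\mid s,a^*))/(1-p)$ and a shared latent carrying the treatment and the responses, with the $a^*$-response equal to the observed-coupled draw on $\{A^\circ=a^*\}$ and an $r$-draw otherwise), and your per-state latent components $(U_t^{x})_{x\in\cS_t}$ are exactly how the paper pastes the local mechanisms together in the proof of Theorem~\ref{thm:sharp}. One harmless inconsistency: your opening paragraph says non-selected actions depend on $\varepsilon_{t+1}^S$ alone, while the confounded case routes them through $Z$ on $\{A^\circ=a\}$; either choice reproduces $P(A_t,S_{t+1}\mid S_t=s)$, so nothing breaks.
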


\begin{proof}
Since $q^s\in\Delta(\cS_{t+1}(s,a^*))$, we have
$\supp q^s\subseteq\cS_{t+1}(s,a^*)$. Also, whenever
$P(S_t=s,A_t=a)>0$, Remark~\ref{rem:observed_support_compatibility}
gives
$\supp P(S_{t+1}\mid S_t=s,A_t=a)\subseteq\cS_{t+1}(s,a)$.

First suppose $P(S_t=s)>0$. Write
$\alpha(a)=P(A_t=a\mid S_t=s)$ and, whenever $\alpha(a)>0$, let
$g_a=P(S_{t+1}\mid S_t=s,A_t=a)$. Thus
$\supp g_a\subseteq\cS_{t+1}(s,a)$ for every observed action.

If $\alpha(a^*)=0$, generate $A_t\sim\alpha$ from private treatment
noise. Using independent transition noise, choose for every action
$a$ with $\alpha(a)>0$ a map into $\cS_{t+1}(s,a)$ having law $g_a$,
and under $a^*$ a map into $\cS_{t+1}(s,a^*)$ having law $q^s$.
For any remaining null action choose an arbitrary constant in its
structural successor set. This defines the transition mechanism for
\emph{all} $a\in\cA$, reproduces the observational law for every
observed action, and gives structural kernel $q^s$ under $a^*$.
Because the treatment and transition noises are independent and
disjoint, condition~(iii) also holds whenever the bidirected edge is
absent.

If $\alpha(a^*)>0$ and $A_t\leftrightarrow S_{t+1}$ is absent from $G$,
the default set is the singleton $\{g_{a^*}\}$, so $q^s=g_{a^*}$.
Generate $A_t\sim\alpha$ from private treatment noise and, independently,
generate the transition under each observed action $a$ with law $g_a$;
at null actions use arbitrary constants in the corresponding
$\cS_{t+1}(s,a)$. Again this specifies one transition mechanism over
the entire action space. Conditions~(i)--(iv) follow, and in particular
the two structural equations use disjoint exogenous sources.

Finally suppose $\alpha(a^*)>0$ and
$A_t\leftrightarrow S_{t+1}$ is present. Put
$\alpha^*=\alpha(a^*)$ and
$\pi(z)=P(A_t=a^*,S_{t+1}=z\mid S_t=s)
       =\alpha^*g_{a^*}(z)$.
If $\alpha^*=1$, the Manski inequalities force $q^s=g_{a^*}$, and the
preceding independent-noise construction works, with treatment
identically equal to $a^*$.

Suppose therefore that $0<\alpha^*<1$, and define
\[
r(z)=\frac{q^s(z)-\pi(z)}{1-\alpha^*}.
\]
The lower Manski inequalities imply $r(z)\geq0$, while
$\sum_z r(z)=1$ because $\sum_zq^s(z)=1$ and
$\sum_z\pi(z)=\alpha^*$. Hence $r$ is a probability distribution.
Moreover, $\supp r\subseteq\cS_{t+1}(s,a^*)$, since both $q^s$ and
$\pi$ vanish outside that set.

Let $B$ have law $\alpha$ and construct one shared latent variable
$U=(B,(Z_a)_{a\in\cA})$. For every action $a$ with $\alpha(a)>0$,
choose $\mathcal L(Z_a\mid B=a)=g_a$, and additionally choose
$\mathcal L(Z_{a^*}\mid B\neq a^*)=r$. All remaining coordinates
$Z_b$ are chosen arbitrarily inside $\cS_{t+1}(s,b)$. Thus a response
$Z_a$ is specified for every $a\in\cA$, not only for $a^*$.
Set $A_t=B$ and let the transition under action $a$ be $S_{t+1}=Z_a$.
Then, for every observed action,
$P_M(A_t=a,S_{t+1}=z\mid S_t=s)=\alpha(a)g_a(z)$, while under
the intervention \(A_t\leftarrow a^*\), the treatment equation
\(A_t=B\) is overridden, but the latent variable \(B\) retains its
original distribution. The resulting next state is \(Z_{a^*}\).
Using the law of total probability over the partition
\(\{B=a^*\},\{B\neq a^*\}\), and the specified
conditional laws of \(Z_{a^*}\), we obtain
\[
\begin{aligned}
P_M(Z_{a^*}=z)
&=P_M(B=a^*,Z_{a^*}=z)
  +P_M(B\neq a^*,Z_{a^*}=z)\\
&=\alpha^*P_M(Z_{a^*}=z\mid B=a^*)
 +(1-\alpha^*)P_M(Z_{a^*}=z\mid B\neq a^*)\\
&=\alpha^*g_{a^*}(z)+(1-\alpha^*)r(z)\\
&=\pi(z)+(1-\alpha^*)r(z)\\
&=q^s(z),
\end{aligned}
\]
Hence
\(\kappa_{M,t}(\cdot\mid s,a^*)=q^s\), as required. The shared latent is permitted by the bidirected edge, and every transition response lies in its corresponding structural successor set.

If instead $P(S_t=s)=0$, there is no observational law to preserve.
Choose the response under $a^*$ to have law $q^s$, and choose arbitrary
responses inside $\cS_{t+1}(s,a)$ for every other action, using private
treatment and transition noises. Thus the mechanism is again defined
for all $a\in\cA$; condition~(i) is vacuous, while (ii)--(iv) hold.
When the bidirected edge is absent, the two noises are chosen
independently, giving~(iii).
\end{proof}

\begin{proof}[Proof of Theorem~\ref{thm:sharp}]
We prove the two inclusions.

\medskip
\noindent\emph{Forward inclusion.}
Fix $M\in\cM(P,G,\mathbf S)$. We show that every structural kernel of
$M$ belongs to the corresponding default feasible set.

Consider first a positive confounded cell $(t,s,a)$ and set
$Z_a=f^S_{M,t+1}(s,a,U_t,\varepsilon_{t+1}^S)$ and
$E_a=\{A_t=a\}$. The state $S_t$ is determined recursively by exogenous
variables from stages strictly before $t$, whereas $Z_a$, with $s$ and
$a$ fixed, depends only on the fresh variables
$(U_t,\varepsilon_{t+1}^S)$. Assumption~\ref{ass:npc} therefore gives
$Z_a\perp S_t$. Hence
\[
\kappa_{M,t}(z\mid s,a)
=
P_M(E_a,Z_a=z\mid S_t=s)
+
P_M(E_a^c,Z_a=z\mid S_t=s).
\]
On $E_a$, consistency gives $Z_a=S_{t+1}$. Since $P_M=P$, the first
term is $\underline\pi_t(z\mid s,a)$, while the second lies between
$0$ and $P(A_t\neq a\mid S_t=s)$. Thus
$\underline\pi_t(z\mid s,a)\leq\kappa_{M,t}(z\mid s,a)
\leq\bar\pi_t(z\mid s,a)$. Since $M$ respects $\mathbf S$,
$\supp\kappa_{M,t}(\cdot\mid s,a)\subseteq\cS_{t+1}(s,a)$, so the
kernel belongs to the default Manski set.

At a positive unconfounded cell, the structural kernel equals the
observed conditional and hence belongs to the singleton default set.
At a null cell, the default set is
$\Delta(\cS_{t+1}(s,a))$, which contains the structural kernel because
$M$ respects the local structural successor sets. The root kernel
belongs to the singleton root set. Thus this membership holds at
\emph{every} cell $(t,s,a)$, not only at cells selected by the regime.

Theorem~\ref{thm:factorization} now gives
$P_M^d\in\cP^d(\cQ^{\mathrm{default}})$. Since $M$ was arbitrary,
\[
\cI^d(P,G,\mathbf S)
\subseteq
\cP^d(\cQ^{\mathrm{default}}).
\]

\medskip
\noindent\emph{Reverse inclusion.}
Take $Q\in\cP^d(\cQ^{\mathrm{default}})$ and write
$q_{t,s}=q_t(\cdot\mid s,d_t(s))$ for its selected local kernels.
For every $(t,s)$, apply Lemma~\ref{lem:local_real} with
$a^*=d_t(s)$ and $q^s=q_{t,s}$. Importantly, although only the kernel
under $a^*$ is prescribed by $Q$, the lemma constructs a complete local
treatment and transition mechanism for every action $a\in\cA$, as
required for membership in $\cM(P,G,\mathbf S)$.

We paste these statewise constructions into a single SCM. For each
stage $t$, collect the local shared variables and private noises as
\[
U_t=(U_{t,s})_{s\in\cS_t},\qquad
\varepsilon_t^A=(\varepsilon_t^{A,s})_{s\in\cS_t},\qquad
\varepsilon_{t+1}^S=(\varepsilon_{t+1}^{S,s})_{s\in\cS_t},
\]
taking the state-indexed components mutually independent except for the
dependence already internal to each shared $U_{t,s}$. The global
structural maps simply select the component indexed by the realized
state $s$. When $A_t\leftrightarrow S_{t+1}$ is absent, $U_t$ is taken
degenerate in the transition equation and the treatment and transition
maps use the separate private-noise vectors supplied by the lemma.
Choose these stagewise exogenous vectors independently across $t$ and
independently of the exogenous variable generating $S_0\sim P(S_0)$.
Hence Assumption~\ref{ass:npc} holds, the resulting SCM is compatible
with $G$, and Lemma~\ref{lem:local_real}(iv) ensures that it respects
$\mathbf S$.

It remains to verify that \(P_M=P\). Let
\[
H_t=(S_0,A_0,\ldots,A_{t-1},S_t).
\]
We prove by induction that \(P_M(H_t=h_t)=P(H_t=h_t)\) for every
prefix \(h_t\). This holds at \(t=0\) by construction. Suppose it holds
at time \(t\), and let \(s\) be the terminal state of \(h_t\). If
\(P(H_t=h_t)=0\), then every extension of \(h_t\) has probability zero
under both laws. Otherwise \(P(S_t=s)>0\), and Proposition~\ref{prop:obs_factorization},
Lemma~\ref{lem:local_real}, and the induction hypothesis give
\[
\begin{aligned}
P_M(H_{t+1}=(h_t,a,s'))
&=P_M(H_t=h_t)P_M(A_t=a,S_{t+1}=s'\mid S_t=s)\\
&=P(H_t=h_t)P(A_t=a,S_{t+1}=s'\mid S_t=s)\\
&=P(H_{t+1}=(h_t,a,s')).
\end{aligned}
\]
Thus the induction holds through \(t=T\), giving equality of the
state--action trajectory laws; since \(Y=y(S_T)\), \(P_M=P\).


Thus $M\in\cM(P,G,\mathbf S)$. Finally,
Lemma~\ref{lem:local_real}(ii) gives
$\kappa_{M,t}(\cdot\mid s,d_t(s))=q_{t,s}$ at every $(t,s)$.
Theorem~\ref{thm:factorization} then shows that the interventional
factorization of $P_M^d$ is exactly the factorization defining $Q$.
Hence $P_M^d=Q$, and therefore
\[
\cP^d(\cQ^{\mathrm{default}})
\subseteq
\cI^d(P,G,\mathbf S).
\]
Combining the two inclusions proves the result.
\end{proof}

\bibliographystyle{plainnat}

\end{document}